\numberwithin{equation}{section}
\newtheorem{thm}{Theorem}[section]
\newtheorem{theorem}[thm]{Theorem}
\newtheorem{example}[thm]{Example}
\begin{document}
	
	\baselineskip 17truept
	\title{ Limit cycles of piecewise smooth differential systems of the type nonlinear center and saddle }
	\date{}
	\author{Nanasaheb Phatangare, Krishnat Masalkar and Subhash Kendre}
	\address{Nanasaheb Phatangare, Department of Mathematics, Fergusson College, Pune,
		-411004, India.(M.S.)} \email{nmphatangare@gmail.com}
	\address{Krishnat Masalkar, Department~of~Mathematics, Abasaheb ~Garware~ College, Pune-411004, India
		(M.S.)} \email{krishnatmasalkar@gmail.com}
	
	\address{Subhash Kendre, Department~of~Mathematics, Savitribai~Phule~Pune~University, Pune-411007, India
		(M.S.)} \email{sdkendre@yahoo.com}
	
	\makeatletter
	\@namedef{subjclassname@2020}{%
		\textup{2020} Mathematics Subject Classification}
	\makeatother
	
	\subjclass[2020]{37G05, 37G10, 37G15, 37E05, 37G35, 37H20, 37J20}
	
	\maketitle \noindent{ \small \textbf{Keywords}:  Piecewise linear differential system, Piecewise smooth differential system, limit cycle,  Hamiltonian, first integral, level curves, resultant.
	}
	
	\begin{abstract}
		Piecewise linear differential systems separated by two parallel straight lines of the type of center-center-Hamiltonian saddle and the center-Hamiltonian saddle-Hamiltonian saddle can have at most one limit cycle and there are systems in these classes having one limit cycle. In this paper, we study the limit cycles of a piecewise smooth differential system separated by two parallel straight lines formed by nonlinear centers and a Hamiltonian saddle. 
	\end{abstract}
	\section { Introduction} 
	The dynamical system is a powerful tool for understanding a diverse range of problems. There are now well-developed qualitative techniques to study smooth dynamical systems \cite{bernardo2008piecewise,meiss2007differential}. It has been proven an extremely effective tool for understanding the behaviour of many physical phenomena such as elastic deformation, nonlinear optical, fluid flows, and biological systems. \par Many dynamical systems that occur naturally are characterized by periods of smooth evolutions affected by instantaneous events. Such dynamical systems are called piecewise smooth dynamical systems. Recently, non-smooth processes such as impact, switching, sliding, and other discrete state transitions have been studied widely. These phenomena arise in any application involving friction, collision, intermittently constrained systems, and processes with switching components. Recently, literature has drawn attention towards nonsmooth dynamical systems, for instance, see \cite{brogliato2000impacts,brogliato1999nonsmooth,kunze2001non}. The study of limit cycles is one of the effective techniques for understanding and analyzing smooth and non-smooth dynamical systems. The limit cycle is an isolated periodic orbit of a dynamical system. In the literature, the study of limit cycles goes back to the nineteenth century, for instance, see \cite{poincare1891integration,andronov1966theory}. Many phenomena such as the Belousov-Zhabotinskii reaction, and Van der Pol oscillator shows the existence of limit cycles, for details, see \cite{van1920theory,van1926lxxxviii,belusov1959periodically,zhabotinsky1964periodical}. 
	
	A continuous piecewise linear differential system (PLDS) formed by two different systems that are separated by a straight line has at most one limit cycle, for instance, see \cite{freire1998bifurcation,buzzi2013piecewise,lum1991global,lum1992global}.
	\par The limit cycle can lie in one zone or it can reside in two or more zones when it comes to piecewise linear differential systems. Sliding limit cycles or crossing limit cycles are limit cycles that span several zones. In this paper, we discuss the crossing limit cycles of piecewise smooth nonlinear differential systems.
	\par It is well known that a discontinuous PLDS can have three limit cycles when they are separated by a straight line, for details, see \cite{buzzi2013piecewise,de2013limit,cardoso2020simultaneous,freire2014general,huan2012number,li2014three}.
	\par There is no limit cycle in a continuous piecewise differential system with two linear centers separated by a straight line. Also, there is no limit cycle in a discontinuous PLDS formed by two linear centers and separated by a straight line, but there are systems with one limit cycle when the system is formed by three linear centers and separated by two straight lines, for instance, see \cite{llibre2018piecewise}.
	\par In \cite{llibre2021limit}, it is proved that the limit cycles are not present in continuous or discontinuous PLDS consisting of two linear Hamiltonian saddles separated by a straight line. A  continuous PLDS formed by linear saddles in three zones separated by two straight lines has one limit cycle.
	\par In \cite{llibre2022limit}, it has been proven that there are no limit cycles in a continuous or discontinuous PLDS formed by a center and a Hamiltonian saddle and separated by a single straight line. The results from \cite{llibre2022limit} were also obtained in \cite{pessoa2022limit}, in fact, this paper contains all the results mentioned here from the papers \cite{llibre2018piecewise,llibre2021limit,llibre2022limit}.
	\par In \cite{franccoise2021quadratic}, we find a detailed discussion of a quadratic planar vector field with a double center and its normal form. In this paper, we use this normal form to study the limit cycles of planar piecewise differential systems placed in two and three zones. In \cite{yang2020limit}, a planar integrable quadratic vector field with the global center is studied for its limit cycle bifurcation. We use it to study the limit cycles of a piecewise planar vector field placed in three zones.
	\par In this paper, the bounds for the number of limit cycles of continuous piecewise differential systems (PDS) and discontinuous piecewise differential systems in two zones and systems in three zones are discussed. The paper is organized as follows: In \textit{Section} \ref{Ch4sec2}, piecewise smooth systems placed in two and three zones formed by a quadratic center and a linear saddle separated by a straight line are discussed. \textit{Section} \ref{Ch4sec3} is to discuss the limit cycles of piecewise smooth systems formed by the general quadratic center and linear saddle. \textit{Section} \ref{Ch4sec4} presents some applications of the results obtained in \textit{Section \ref{Ch4sec2}} and \textit{Section \ref{Ch4sec3}}.
	\section{Quadratic center and linear saddle}\label{Ch4sec2}
	Consider the quadratic polynomial differential  system having a linear center at $(0,0)$;
	\begin{align}\label{Ch4s2e_1}
		\dot{X}=&\begin{cases}
			-y+a_1x^2+b_1xy+c_1y^2\\
			x+a_2x^2+b_2xy+c_2y^2,
		\end{cases}
	\end{align}
	where $a_i,b_i,c_i,i=1,2,$ are parameters.
	
	Using the rotation,
	\begin{align*}
		u=&x\cos \theta -y \sin \theta ~;\\ v=&x\sin \theta+y\cos \theta,
	\end{align*}
	the system (\ref{Ch4s2e_1}) becomes, 
	\begin{align*}
		\begin{bmatrix} \dot{u}\\\dot{v} \end{bmatrix}&=\begin{bmatrix} \cos \theta &-\sin \theta \\ \sin \theta & \cos \theta \end{bmatrix} \begin{bmatrix} \sin \theta &-\cos \theta \\ \cos \theta & \sin \theta \end{bmatrix} \begin{bmatrix}u\\v\end{bmatrix}+\begin{bmatrix} lu^2+muv+nv^2\\pu^2+quv+rv^2 \end{bmatrix} \nonumber\\
		&=\begin{bmatrix}-v\\u \end{bmatrix}+\begin{bmatrix} lu^2+muv+nv^2\\pu^2+quv+rv^2 \end{bmatrix},
	\end{align*}
	where \begin{align*}r=&\sin \theta (a_1\sin^2 \theta +b_1\sin \theta \cos \theta +c_1 \cos^2 \theta)+\cos \theta (a_2\sin^2 \theta +b_2\sin \theta \cos \theta +c_2 \cos^2 \theta)\\
		=&a_1\sin^3 \theta +(b_1+a_2)\sin^2 \theta \cos \theta +(c_1+b_2)\cos^2\theta\sin\theta +c_2\cos^3 \theta,	
	\end{align*} and $l,m,n,p,q$ are constants depending on $\theta$.
	
	Since the expression for $r$ is a cubic homogeneous trigonometric polynomial, we can choose the value of $\theta$ such that $r=0$.
	
	Now renaming variables $u$ and $v$ by $x$ and $y$, respectively,
	we can write the system (\ref{Ch4s2e_1}) in the following normal form, 
	\begin{align}\label{Ch4s2e_2}
		\dot{X}=&\begin{cases}
			-y+lx^2+mxy+ny^2\\
			x+px^2+qxy.
		\end{cases}
	\end{align}
	In \cite{franccoise2021quadratic}, it is proved that the system (\ref{Ch4s2e_2}) is Hamiltonian  if $m=0, q=-2l,n\neq 0$ and the corresponding Hamiltonian is given by 
	\begin{align} \label{Ch4s2e_2.3}
		F=\frac{1}{2}(x^2+y^2)-lx^2y-\frac{n}{3}y^3+\frac{p}{3}x^3.
	\end{align}
	Moreover, the system has centers at $(0,0)$ and $(0,\frac{1}{n})$.\\
	Thus, the Hamiltonian system with a centers at $(0,0)$ and $(0,\frac{1}{n})$ is given by 
	\begin{align}\label{Ch4s2e2}
		\dot{X}=&\begin{cases}
			-y+lx^2+ny^2\\
			x+px^2-2lxy
		\end{cases}
	\end{align}
	with $n\neq0.$

\par	The quadratic integrable system having unique center at $(0,\xi)$, as in \cite{yang2020limit}, takes the normal form, 
	\begin{align}\label{Ch4s2e_3}
		\dot{X}=&\begin{cases}
			y-2x^2-\xi\\
			-2xy
		\end{cases},
	\end{align} where $\xi>0$.

	The first integral of the systems (\ref{Ch4s2e_3})  is $$I(x,y)=\dfrac{1}{y^2}\left(x^2-y+\dfrac{\xi}{2}\right).$$
	
	The normal form of a linear Hamiltonian system with the saddle at $$\left(-\dfrac{\beta \mu+\delta \gamma}{\alpha \delta -\beta^2},\dfrac{\alpha \mu+\beta \gamma}{\alpha \delta -\beta^2}  \right),$$ is given by
	\begin{align}\label{Ch4s2e_4}
		\dot{X}=\begin{cases}
			-\beta x-\delta y+\mu \\
			\alpha x+\beta y+\gamma,
		\end{cases} 
	\end{align} where $\alpha =0$ or $1$. Also, $\gamma=0, \beta \neq 0$ when $\alpha=0$ and  $\delta <\beta^2$ when $\alpha =1$, for instance, see \cite{llibre2021limit}.
	Note that the eigenvalues of system (\ref{Ch4s2e_4}) are $\pm \beta$ if $\alpha=0$ and $\pm \sqrt{\beta^2-\delta}$ if $\alpha=1$. Also, its saddle separatrices are given by
	\begin{align*}
		y-{\frac {\mu\,\alpha+\beta\,\gamma}{\delta\,\alpha-{\beta}^{2}}}+{
			\frac {\beta+\sqrt {-\delta\,\alpha+{\beta}^{2}}}{\delta} \left( x-{
				\frac {\beta\,\mu+\delta\,\gamma}{-\delta\,\alpha+{\beta}^{2}}}
			\right) }=&0,~ \text{and}\\
		y-{\frac {\mu\,\alpha+\beta\,\gamma}{\delta\,\alpha-{\beta}^{2}}}+{
			\frac {\beta-\sqrt {-\delta\,\alpha+{\beta}^{2}}}{\delta} \left( x-{
				\frac {\beta\,\mu+\delta\,\gamma}{-\delta\,\alpha+{\beta}^{2}}}
			\right) }=&0.
	\end{align*} 
	The Hamiltonian function for (\ref{Ch4s2e_4}) is
	\begin{align}\label{Ch4s2e_2.6}
		H(x,y)=&-\gamma x+\mu y-\beta xy-\dfrac{1}{2}(\alpha x^2+\delta y^2).
	\end{align}
	If $\beta=0$,  then the system (\ref{Ch4s2e_4})  becomes 
	\begin{align}\label{Ch4s2e_2.7}
		\dot{X}=\begin{cases}
			-\delta y+\mu \\
			x+\gamma.
		\end{cases} 
	\end{align}
	It is clear that the system (\ref{Ch4s2e_2.7}) has a saddle point at $\left(-\gamma,\dfrac{\mu}{\delta}\right)$ if  and only if $\delta<0$. Also, its Hamiltonian is given by
	$$H(x,y)=-\gamma x+\mu y-\frac{1}{2}(x^2+\delta y^2).$$
	Suppose that $\beta\neq 0$. Then by the rescaling of time $\tau=\beta t$, we can take $\beta=1$ in the system (\ref{Ch4s2e_4}).
	Thus, we can consider the general linear Hamiltonian system with the saddle as,
	\begin{align}\label{Ch4s2e_4.1}
		\dot{X}=\begin{cases}
			-x-\delta y+\mu \\
			\alpha x+ y+\gamma
		\end{cases} 
	\end{align}
	with $\alpha=0,~1$ and $\delta<1$. It has saddle separatrices 
	\begin{align*}
		y+\frac{\alpha\mu  +\gamma}{1-\alpha\delta }+\frac{1+\sqrt{1-\alpha \delta}}{\delta}\left( x-\frac{\mu+\delta \gamma}{1-\alpha\delta }\right)=&0,~ \text{and}\\
		y+\frac{\alpha\mu  +\gamma}{1-\alpha\delta }+\frac{1-\sqrt{1-\alpha \delta}}{\delta}\left( x-\frac{\mu+\delta \gamma}{1-\alpha\delta }\right)=&0.
	\end{align*} with Hamiltonian
	\begin{align} \label{Ch4s2e_2.10}
		H(x,y)=-\frac{1}{2}\alpha x^2-\frac{1}{2}\delta y^2-xy-\gamma x+\mu y.
	\end{align}
	In this paper, we use the double center (\ref{Ch4s2e2}) and the global center (\ref{Ch4s2e_3}) to study the limit cycles of planar piecewise differential systems.
	\subsection{Quadratic double center and linear saddle}
	Consider a piecewise differential system consisting of a nonlinear center and a linear saddle separated by a straight line;
	\begin{align}\label{Ch4s2e_6}
		\dot{X}=\begin{cases}
			(F_y(x,y),-F_x(x,y))&~\text{if}~x<0\\
			(H_y(x,y),-H_x(x,y))&~\text{if}~x>0,
		\end{cases}
	\end{align} where $F$ and $H$ are given by (\ref{Ch4s2e_2.3}) and (\ref{Ch4s2e_2.6}), respectively.
	The piecewise-smooth system (\ref{Ch4s2e_6}) is said to be continuous if, on the separation boundary, the extensions of the left-hand and right-hand vector fields coincide.
	\begin{theorem}
		Consider the PDS (\ref{Ch4s2e_6}). 
		\begin{enumerate}
			\item The system (\ref{Ch4s2e_6}) has no limit cycle if it is continuous.
			\item The system (\ref{Ch4s2e_6}) has at most one limit cycle if it is discontinuous.		
		\end{enumerate} 
	\end{theorem}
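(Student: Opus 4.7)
The plan is to exploit that both halves of (\ref{Ch4s2e_6}) are Hamiltonian: $F$ is a first integral in $\{x<0\}$ and $H$ in $\{x>0\}$. Any crossing limit cycle must meet the discontinuity line $x=0$ transversally at two distinct points $(0,y_1)$ and $(0,y_2)$, and since each arc lies on a level set of the corresponding Hamiltonian, these points must simultaneously satisfy
\begin{align*}
F(0,y_1)=F(0,y_2),\qquad H(0,y_1)=H(0,y_2).
\end{align*}
Using $F(0,y)=\tfrac{1}{2}y^2-\tfrac{n}{3}y^3$ from (\ref{Ch4s2e_2.3}) and $H(0,y)=\mu y-\tfrac{\delta}{2}y^2$ from (\ref{Ch4s2e_2.6}), and cancelling the nonzero factor $y_1-y_2$, these reduce to two polynomial equations in the symmetric functions $s=y_1+y_2$ and $q=y_1 y_2$:
\begin{align*}
\tfrac{1}{2}s-\tfrac{n}{3}(s^2-q)=0,\qquad \mu-\tfrac{\delta}{2}s=0.
\end{align*}

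For part~(2), in the generic regime $n\delta\neq 0$ the second equation fixes $s=2\mu/\delta$ and the first then fixes $q$, so $\{y_1,y_2\}$ is uniquely determined as the roots of $t^2-st+q=0$. By uniqueness of trajectories within each half-system, this yields at most one crossing limit cycle. The degenerate subcases $n=0$, $\delta=0$, or $\beta=0$ are treated separately; in each the reduced system either forces $y_1=y_2$ (no crossing), or makes $x=0$ invariant for the right flow (again no crossing), or produces a continuous family of non-isolated periodic orbits (still no limit cycle), so the ``at most one'' conclusion persists.

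For part~(1), continuity of the piecewise vector field on $x=0$ forces the left boundary value $(y-ny^2,0)$ and the right boundary value $(\mu-\delta y,\gamma+\beta y)$ to coincide for every $y$. Matching coefficients yields the rigid parameter set $n=0$, $\mu=0$, $\delta=-1$, $\beta=0$, $\gamma=0$. Under these, the right half-system collapses to the linear saddle $\dot x=y,\ \dot y=x$, whose orbits in $\{x>0\}$ are branches of the hyperbolas $y^2-x^2=c>0$ with $y$ of constant sign; each such branch escapes to infinity without meeting $x=0$ again. Hence no arc of the right system connects two distinct points of $x=0$, ruling out crossing periodic orbits; since each half is Hamiltonian (no isolated closed orbits inside either zone), the continuous system has no limit cycles at all.

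The main obstacle is the case bookkeeping in part~(2): in every degenerate slice of parameter space one must verify that either the Hamiltonian-matching equations still determine $\{y_1,y_2\}$ uniquely, or else a direct geometric argument (typically invariance of $x=0$ for the right flow) rules out crossings. A secondary subtlety in part~(1) is that the continuity constraint $n=0$ destroys the second center of $F$, but the origin remains a (nonlinear) center of the left system by Hamiltonicity and its purely rotational linearization.
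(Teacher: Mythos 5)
Your proposal is correct and follows essentially the same route as the paper: match the first integrals $F$ and $H$ at the two crossing points on $x=0$, pass to the symmetric functions $y_1+y_2$ and $y_1y_2$, and conclude that the pair $\{y_1,y_2\}$ is determined by a single quadratic, hence at most one crossing limit cycle. Minor divergences in your favour: your continuity matching gives $\delta=-1$ (consistent with the saddle condition $\delta<\beta^2=0$, whereas the paper writes $\delta=1$) and you then exclude periodic orbits geometrically rather than via the paper's claimed continuum, and you explicitly flag the degenerate subcases $n=0$, $\delta=0$ that the paper divides through silently.
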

	\begin{proof}     
		If the system $(\ref{Ch4s2e_6})$ has a periodic solution, then it will meet the line $x=0$ at two distinct points. Suppose that $(0,y_1)$ and $(0,y_2)$ are the points at which the periodic solution of (\ref{Ch4s2e_6}) meet the line $x=0$, where $y_1<y_2$.
		
		Since integrals of the systems $(\ref{Ch4s2e2})$ and $(\ref{Ch4s2e_4})$ are $F(x,y)$ and $H(x,y)$, respectively, we have
		$$F(0,y_1)=F(0,y_2)$$ and $$H(0,y_1)=H(0,y_2).$$
		This implies that, 
		\begin{equation}\label{Ch4s2e_7}
			\left.
			\begin{split} \left(\frac{1}{2}y_1^2-\frac{n}{3}y_1^3\right)-\left(\frac{1}{2}y_2^2-\frac{n}{3}y_2^3\right)=&0,~\text{and}\\	
				-\dfrac{1}{2}\,\delta\,{y_{{1}}}^{2}+\mu\,y_{{1}}+\dfrac{1}{2}\,\delta\,{y_{{2}}}^{2}-
				\mu\,y_{{2}}=&0.
			\end{split}
			\right\}
		\end{equation}
		If the system (\ref{Ch4s2e_6}) is continuous, then the systems $(\ref{Ch4s2e2})$ and $(\ref{Ch4s2e_4})$ must agree on the line $x=0$. Hence, we  have  $\beta=\gamma=0, \delta=1$ and $n=\mu=0.$     
		
		Since $y_1<y_2$,  solutions $(y_1,y_2)$ of (\ref{Ch4s2e_7}) are given by,  
		\begin{align} \label{Ch4s2e_9}
			y_1=-y_2.
		\end{align}
		
		Thus, a continuous PDS consisting of a quadratic center and a linear saddle has a continuous band of periodic solutions and hence no limit cycle.
		
		Next, assume that the system (\ref{Ch4s2e_6})  is not continuous along the boundary $x=0$. Then from (\ref{Ch4s2e_7}) we get,
		\begin{equation}\label{Ch4s2e_10}
			\left.
			\begin{split}
				\frac{1}{2}(y_1-y_2)(y_1+y_2)-\frac{n}{3}(y_1-y_2)(y_1^2+y_1y_2+y_2^2)=&0,\\
				-\frac{1}{2}\delta (y_1-y_2)(y_1+y_2)+\mu(y_1-y_2)=&0.
			\end{split}
			\right\}
		\end{equation}
		Since $y_1<y_2$, from (\ref{Ch4s2e_10}), we have
		\begin{equation}\label{Ch4s2e_11}
			\left.
			\begin{split}
				\frac{1}{2}(y_1+y_2)-\frac{n}{3}(y_1^2+y_1y_2+y_2^2)=&0,\\
				-\frac{1}{2}\delta (y_1+y_2)+\mu=&0.
			\end{split}
			\right\}
		\end{equation}
		Let $e_1=y_1+y_2$ and $e_2=y_1y_2$. Then, from (\ref{Ch4s2e_11}) we get, 
		\begin{equation}
			\left.
			\begin{split}
				e_1=&\dfrac{2\mu}{\delta},\\
				e_2=&-\frac{3\mu}{n\delta}+\dfrac{4\mu^2}{\delta^2}.
			\end{split}
			\right\}	
		\end{equation}
		That is, $y_1, y_2$ satisfy the quadratic equation 
		\begin{align}\label{Ch4s2e_13}
			y^2-\dfrac{2\mu}{\delta} y-\frac{3\mu}{n\delta}+\dfrac{4\mu^2}{\delta^2}=0.
		\end{align}
		The discriminant of the equation (\ref{Ch4s2e_13}) is given by $\Delta= \dfrac{12\mu}{m\delta^2}(\delta-n\mu).$
		Thus, the differential system (\ref{Ch4s2e_6}) has exactly one periodic solution if $\dfrac{12\mu}{m\delta^2}(\delta-n\mu)>0$, otherwise, no solution.	
	\end{proof}

	Now, we consider a PDS in three zones formed by two linear saddles and one nonlinear center that are separated by two straight lines, $x=1$ and $x=-1$.
	We have the following three possibilities:
	\begin{enumerate}
		\item Linear saddles in the regions $\{(x,y):-1<x<1\}$ and  $\{(x,y):x>1\}$, whereas quadratic center in the region $\{(x,y):x<-1\}$.
		
		\item Quardatic center in the region $\{(x,y):-1<x<1\}$, whereas saddles (linear) in the regions $\{(x,y):x<-1\}$ and $\{(x,y):x>1\}$.
		
		\item Linear saddles in the regions $\{(x,y):x<-1\}$ and $\{(x,y):-1<x<1\}$, whereas quadratic center in the region $\{(x,y):x>1\}$.
	\end{enumerate}  
	Here we discuss the case (1). Case (2) and Case (3) can be treated similarly.
	
	Consider a linear system with a Hamiltonian saddle,      
	\begin{align} \label{Ch4s2e_14}
		\dot{X}=\begin{cases}
			-\beta_1 x-\delta_1 y+\mu_1 \\
			\alpha_1 x+\beta_1 y+\gamma_1.
		\end{cases} 
	\end{align} with $\alpha_1=0~\text{or}~~1$; $\alpha_1=0$ imply $\gamma_1=0, \beta_1\neq 0;$ $\alpha_1=1$ imply $\delta_1<\beta_1^2$ and
	its first integral or Hamiltonian is given by
	\begin{align}\label{Ch4s2e_15}
		H_1(x,y)=&-\gamma_1x+\mu_1y-\beta_1xy-\dfrac{1}{2}(\alpha_1{x}^{2}+\delta_1\,{y}^{2}).
	\end{align}    
	
	Let us assume another linear system with a Hamiltonian saddle,      
	\begin{align} \label{Ch4s2e_16}
		\dot{X}=\begin{cases}
			-\beta_2 x-\delta_2 y+\mu_2 \\
			\alpha_2 x+\beta_2 y+\gamma_2
		\end{cases}
	\end{align}
	with  $\alpha_2=0~\text{or}~~1$; $\alpha_2=0$ imply $\gamma_2=0, \beta_2\neq 0;$ $\alpha_2=1$ imply $\delta_2<\beta_2^2$ and Hamiltonian 
	\begin{align}\label{Ch4s2e_17}
		H_2(x,y)=&-\gamma_2x+\mu_2y-\beta_2xy-\dfrac{1}{2}(\alpha_2{x}^{2}+\delta_2{y}^{2}).
	\end{align}       
	Now, define a PDS,   
	\begin{align}\label{Ch4s2e_18}
		\dot{X}=\begin{cases}
			(F_y(x+1,y),-F_x(x+1,y)),&\mbox{if}~x<-1\\
			({H_1}_y(x,y),-{H_1}_x(x,y)),&\mbox{if}~-1<x<1\\
			({H_2}_y(x,y),-{H_2}_x(x,y)),&\mbox{if}~x>1,
		\end{cases}
	\end{align} where $F, H_1$ and $H_2$ are given gy (\ref{Ch4s2e_2.3}), (\ref{Ch4s2e_15}) and (\ref{Ch4s2e_17}), respectively.
	\begin{theorem}\label{thmCh4s3}
		Consider the piecewise differential system (\ref{Ch4s2e_18}). Then we have the following:
		\begin{enumerate}
			\item Systems (\ref{Ch4s2e_18}) has at most four limit cycles if $\delta_1\delta_2n\neq0.$
			\item Systems (\ref{Ch4s2e_18}) has at most two limit cycles if $\delta_1=0$ and $\delta_2n\neq0.$
			\item Systems (\ref{Ch4s2e_18}) has a period annulus if $$(\mu_1+\beta_1)(2\delta_2\gamma_1+(\mu_2-\beta_2)(\mu_1-\beta_1))=0~\text{and}~ \delta_1=n=0.$$
		\end{enumerate}
		
	\end{theorem}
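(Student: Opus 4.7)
The plan is to encode each crossing limit cycle of (\ref{Ch4s2e_18}) by its four intersection points with the separation lines $x=\pm 1$ and then reduce the resulting matching conditions to a single polynomial equation whose degree gives the claimed bound. A closed orbit traversing all three zones meets $x=-1$ at two points $(-1,y_1),(-1,y_2)$ with $y_1<y_2$ and meets $x=1$ at two points $(1,y_3),(1,y_4)$ with $y_3<y_4$. Since $F$, $H_1$, $H_2$ are first integrals in their respective zones (see (\ref{Ch4s2e_2.3}), (\ref{Ch4s2e_15}), (\ref{Ch4s2e_17})), the arcs must lie on common level curves, yielding the four matching conditions
\[
F(0,y_1)=F(0,y_2),\quad H_2(1,y_3)=H_2(1,y_4),\quad H_1(-1,y_i)=H_1(1,y_{i+2})\ (i=1,2).
\]
Dividing the first two by the factors $y_1-y_2$ and $y_3-y_4$ converts them into a quadratic symmetric equation in $(y_1,y_2)$ and the linear relation $\delta_2(y_3+y_4)=2(\mu_2-\beta_2)$, while the last two are conics in $(y_1,y_3)$ and $(y_2,y_4)$ carrying all of the $\delta_1$-dependence.

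For part (1), with $\delta_1\delta_2 n\neq 0$, I would introduce the symmetric variables $u=y_1+y_2$, $v=y_2-y_1$, $s=y_3+y_4$, $t=y_4-y_3$. The $F$-equation becomes $v^2=3u(2-nu)/n$ (quadratic in $u$), while the $H_2$-equation fixes $s=2(\mu_2-\beta_2)/\delta_2$. Summing the two $H_1$-equations and substituting these gives $t^2$ as a quadratic polynomial in $u$, and subtracting them yields the linear relation
\[
v\Bigl[(\mu_1+\beta_1)-\tfrac{\delta_1}{2}u\Bigr]=\pm\, t\Bigl[(\mu_1-\beta_1)-\tfrac{\delta_1}{2}s\Bigr].
\]
Squaring this difference relation and inserting the polynomial expressions for $v^2$ and $t^2$ produces a single polynomial equation in $u$ whose leading term $v^2\,[(\mu_1+\beta_1)-\tfrac{\delta_1}{2}u]^2$ has degree $4$, bounding the number of admissible values of $u$ — and hence the number of crossing limit cycles — by four.

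For part (2), $\delta_1=0$ forces the two $H_1$-equations to become linear, so after eliminating $y_4$ via the $H_2$-equation, $y_1$ and $y_2$ become affine functions of $y_3$; in fact $y_1+y_2$ turns out to be a constant $u_0$ independent of $y_3$, while $v(y_3):=y_2-y_1$ is linear in $y_3$. Substituting into the (quadratic) $F$-equation reduces it to $v(y_3)^2=c$ with $c$ constant — a quadratic equation in $y_3$ admitting at most two real roots, corresponding to the two possible pairings (non-crossed versus crossed) of the middle-zone arcs. For part (3), the further assumption $n=0$ collapses the $F$-equation to $y_1+y_2=0$, and the three remaining linear conditions on $(y_1,y_2,y_3,y_4)$ are then compatible for every choice of $y_1$ precisely when
\[
(\mu_1+\beta_1)\bigl[2\delta_2\gamma_1+(\mu_2-\beta_2)(\mu_1-\beta_1)\bigr]=0,
\]
the first factor appearing because the $(\mu_1+\beta_1)(y_1+y_2)$ contribution obtained by summing the two $H_1$-equations vanishes identically. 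When this scalar identity holds, $y_1$ remains a free parameter and the matching system produces a one-parameter family of crossing periodic orbits — a period annulus.

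The main technical difficulty is establishing the sharp bound of four in part (1): the naive Bezout count from four quadratic equations permits up to sixteen complex solutions, and obtaining the bound of four requires exploiting the symmetric-function reduction that decouples $v^2$ and $t^2$ through the common variable $u$, together with the fact that the $H_2$-equation is merely linear. A secondary concern is verifying that each algebraic solution corresponds to a true geometric limit cycle: the ordering constraints $y_1<y_2$, $y_3<y_4$ and the non-negativity of $v^2,t^2$ may exclude some roots in specific parameter regimes, but they do not increase the count.
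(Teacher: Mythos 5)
Your argument reaches the paper's conclusions for parts (1) and (2), but by a genuinely different elimination scheme. The paper works directly in $y_1,\dots,y_4$ and applies successive resultants: it eliminates $y_4$ from the middle- and right-zone level conditions, then $y_3$, arriving at a conic $Ay_1^2+By_2^2+Cy_1+Dy_2+E=0$, and finally takes the resultant with the cubic-center condition (\ref{Ch4s3eq1}) to obtain the quartic (\ref{Ch4s3eq7}) in $y_1$; the cases $\delta_1=0$ and $\delta_1=n=0$ are handled by repeating the same eliminations. Your symmetric-variable substitution $u=y_1+y_2$, $v=y_2-y_1$, $s=y_3+y_4$, $t=y_4-y_3$ gets the same degree count more transparently: the center condition and the summed middle-zone conditions express $v^2$ and $t^2$ as quadratics in $u$, and squaring the differenced middle-zone condition yields a quartic in $u$ whose leading coefficient is $-\tfrac{3}{4}\delta_1^2$ (after normalizing $n=1$), nonzero exactly under the hypothesis of part (1). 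Your route also cleans up a multiplicity point the paper glosses over: each admissible $u$ determines $v$ and $t$ as positive square roots, hence at most one cycle per root, whereas a root $y_1$ of the paper's final resultant could a priori pair with two values of $y_2$ in (\ref{Ch4s3eq1}).

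Part (3) is the one place where your argument is no better than the paper's, and where both are loose. Your own computation shows that once $n=\delta_1=0$ forces $y_1+y_2=0$, summing the two middle-zone conditions kills the $(\mu_1+\beta_1)(y_1+y_2)$ term identically and leaves the single compatibility condition $2\delta_2\gamma_1+(\mu_2-\beta_2)(\mu_1-\beta_1)=0$, together with $\mu_1-\beta_1\neq 0$ so that $y_3$ and $y_4$ can be solved in terms of $y_1$. The extra factor $(\mu_1+\beta_1)$ in the stated criterion is not produced by this reasoning: if $\mu_1+\beta_1=0$ while the bracket is nonzero, the two middle-zone conditions (now free of $y_1,y_2$) force $y_3=y_4$, so there is no crossing periodic orbit at all, let alone an annulus. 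You have reproduced the paper's product condition without actually deriving it; you should either state the condition your computation yields or supply the missing justification for the first factor.
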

	\begin{proof}  
		
		If the system (\ref{Ch4s2e_18}) has a solution which is a limit cycle, then it meets the lines $x=-1$ and $x=1$ in four different points. Let $(-1,y_1),(-1,y_2),(1,y_3)$ and $(1,y_4)$ be the points where the limit cycle intersects the straight line $x+1=0$ and $x-1=0$, where $y_1<y_2$ and $y_4<y_3$.
		
		Note that, the points $(-1,y_1)$ and $(-1,y_2)$ lie on the same level curve of $F(x+1,y)$, the points $(-1,y_1)$ and $(1,y_4)$ on same level curve of $H_1$ and the points $(1,y_3)$ and $(1,y_4)$ lie on same a level curve of  $H_2$. Hence,
		\begin{equation}
			\left.
			\begin{split}\label{Ch4s2e19}
				F(-1+1,y_1)=F(-1+1,y_2),\\
				H_1(-1,y_1)=H_1(1,y_4),\\
				H_1(-1,y_2)=H_1(1,y_3),\\
				H_2(1,y_3)=H_2(1,y_4).
			\end{split}
			\right\} 
		\end{equation}   
		From (\ref{Ch4s2e19}) we get the following set of equations,
		\begin{align}
			\frac{1}{2}(y_1^2-y_2^2)-\frac{n}{3}(y_1^3-y_2^3)=&0,
			\\
			-\frac{1}{2}\delta_1(y_1^2-y_4^2)+(\mu_1+\beta_1)y_1+(\beta_1-\mu_1)y_4=&-2\gamma_1,\\
			-\frac{1}{2}\delta_1(y_2^2+y_3^2)+(\mu_1+\beta_1)y_2+(\beta_1-\mu_1)y_3=&-2\gamma_1,\\
			-\frac{1}{2}(y_3-y_4)\left(\delta_2(y_3+y_4)-2\mu_2+2\beta_2\right)=&0.
		\end{align}	
		Since $y_1\neq y_2$ and $y_3\neq y_4$, we obtain
		\begin{align}\label{Ch4s3eq1}
			-n(y_1^2+y_1y_2+y_2^2)+\frac{3}{2}(y_1+y_2)&=0,
			\\\label{Ch4s3eq2}
			\delta_1(y_1^2-y_4^2)+2l_1y_1+2m_1y_4+4\gamma_1&=0,\\
			\label{Ch4s3eq3}
			\delta_1(y_2^2-y_3^2)+2l_1y_2+2m_1y_3+4\gamma_1&=0,\\
			\label{Ch4s3eq4}
			\delta_2(y_3+y_4)+2l_2&=0,
		\end{align}
		where $$l_1=-(\mu_1+\beta_1), m_1=\mu_1-\beta_1~\text{and}~l_2=-(\mu_2-\beta_2) .$$
		Using the resultant theory, for instance, see \cite{stiller1996introduction} and \cite{cox1997ideals} p.115, we eliminate $y_4$ from the equations (\ref{Ch4s3eq2}) and (\ref{Ch4s3eq4}) to get,
		\begin{align}\label{Ch4s3eq5}
			\delta_2^2\left( \delta_1(y_1^2-y_3^2)+2l_1y_1-2m_1y_3+4\gamma_1\right)-4\delta_2l_2(\delta_1y_3+m_1)-4\delta_1l_2^2=0.
		\end{align}
		If $\delta_1\neq 0, \delta_2\neq 0$, then eliminating $y_3$ from (\ref{Ch4s3eq3}) and (\ref{Ch4s3eq5}), we get
		\begin{align}\label{Ch4s3eq6}
			Ay_1^2+By_2^2+Cy_1+D y_2+E=0,
		\end{align}
		where \begin{align*}
			A&=-4\delta_1\delta_2^2l_1^2,~~
			B=4\delta_1\delta_2^2m_1^2,~~
			C=-8\delta_2l_1(2\delta_1\delta_2\gamma_1-2\delta_1l_2m_1-\delta_2m_1^2),~~
			D=8\delta_2^2m_1^2l_1\\
			\text{and}~E&=-16\delta_2l_2m_1^3+(-16\delta_1l_2^2+32\delta_2^2\gamma_1)m_1^2+32\delta_1
			\delta_2\gamma_1l_2m_1-16\delta_1{\delta_2}^{2}{\gamma_1}^{2}.
		\end{align*}
		
		Further, for ease of calculations, we can merge the parameters $\delta_1$ and $\delta_2$ with other parameters. Thus, we take parameter values $\delta_1=\delta_2=1$, which will not affect the degree of the polynomial in (\ref{Ch4s3eq7}) as $A_1$ remains nonzero.
		Then we have, 
		\begin{align*}
			A&=-4l_1^2,~~
			B=4m_1^2,~~
			C=8l_1(m_1^2+2l_2m_1-2\gamma_1),~~
			D=8m_1^2l_1~\text{and}\\
			E&=-16l_2m_1^3+(-16l_2^2+16\gamma_1)m_1^2+32
			\gamma_1l_2m_1-16\gamma_1^2.
		\end{align*}
		Next, if $n\neq 0$ then for ease of calculations, we can take $n=1$. Note that this does not affect the degree of the polynomial in (\ref{Ch4s3eq1}) and hence the degree of the polynomial in (\ref{Ch4s3eq7}). Again, by the resultant theory, eliminating $y_2$ from equations (\ref{Ch4s3eq6}) and (\ref{Ch4s3eq1}),
		we get
		\begin{align} \label{Ch4s3eq7}
			A_1y_1^4+A_2y_1^3+A_3y_1^2+A_4y_1+A_5=0,
		\end{align}
		where 
		\begin{align*}
			A_1&=A^2-AB+B^2, A_2=2AC-AD+3B^2-BC-BD,\\
			A_3&=C^2+D^2+2AE-BE-CD-\frac{3}{2}AD-3BD+\frac{9}{4}AB+\frac{9}{4}B^2,\\
			A_4&=\frac{3}{2}(D^2-CD)+2CE-DE+\frac{9}{4}B(C-D),~\text{and}\\
			A_5&=E^2+\frac{9}{4}BE-\frac{3}{2}DE.
		\end{align*}
		Thus, we have at most four real values for $y_1$ and hence $y_2, y_3, y_4$ are determined successively from the equations (\ref{Ch4s3eq1}), (\ref{Ch4s3eq2}), (\ref{Ch4s3eq3}) and (\ref{Ch4s3eq4}).
		Thus, if $\delta_1\delta_2 n\neq 0$, then the system (\ref{Ch4s2e_18}) has at most four limit cycles.
		
		Now, assume that $\delta_1=0$. Similar to the above procedure, we can eliminate $y_2, y_3, y_4$ and get the equation
		\begin{align}
			B_1y_1^2+B_2y_1+B_3=0,
		\end{align}
		where $B_1=n\delta_2^2l_1^2, B_2=2\delta_2l_1nB_3$ and $B_3=(2\delta_2\gamma_1-l_2m_1)$.
		
		Thus, if $\delta_1=0$ and $\delta_2n\neq 0$, then the system (\ref{Ch4s2e_18}) has at most two limit cycles.
		
		Finally, if $\delta_1=0, n=0$, then we get an
		infinite number of periodic orbits  if $l_1B_3=0,$ otherwise no periodic orbit.
	\end{proof}
	\subsection{Quadratic global center and linear saddle}
	Consider the planar piecewise system with switching boundary $x=0$,
	\begin{align}\label{Ch4s3eq17}
		\dot{X}=\begin{cases}
			(y-2x^2-\xi, -2xy),~~&\text{if}~~x<0\\
			(-\beta x-\delta y+\mu, \alpha x+\beta y+\gamma ),~~&\text{if}~~x>0.
		\end{cases}
	\end{align}
	The vector field on the line $x=0$ is defined according to the Filippov convex combination.
	The first integral of the left subsystem is $I(x,y)=y^{-2}\left(x^2-y+\dfrac{\xi}{2}\right)$ and right subsystem is Hamiltonian with Hamiltonian $H(x,y)=-\gamma x+\mu y-\beta xy-\frac{1}{2}(\alpha x^2+\delta y^2)$.
	\begin{theorem}
		If $\beta\neq 0$ then the system (\ref{Ch4s3eq17})  cannot be continuous. If $\beta=0$ and the system is continuous then the right subsystem has a saddle at $(0, \xi)$ on the separation boundary and the system has no periodic solution. 
		
		Further, the system (\ref{Ch4s3eq17}) has exactly one limit cycle if and only if $\mu \delta\left(2\xi+\dfrac{\mu}{\delta}\right)>0$ and $\xi\dfrac{\mu}{\delta}<0$.
	\end{theorem}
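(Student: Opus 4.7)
The plan is to prove the three assertions in turn by an intersection-of-level-sets argument at the switching line $x=0$.

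For assertion (1), evaluate the left field at $(0,y)$ to get $(y-\xi,0)$ and the right field at $(0,y)$ to get $(-\delta y+\mu,\beta y+\gamma)$. Continuity of the piecewise field on $x=0$ demands these agree as polynomials in $y$; matching the $\dot y$-components forces $\beta=\gamma=0$. Hence $\beta\neq 0$ excludes continuity.

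For assertion (2), assume $\beta=0$ and continuity. Matching the $\dot x$-components then forces $\delta=-1$ and $\mu=-\xi$; the saddle-normalization hypothesis then forces $\alpha=1$, and substituting these values into the saddle coordinates $\bigl(-\frac{\beta\mu+\delta\gamma}{\alpha\delta-\beta^{2}},\frac{\alpha\mu+\beta\gamma}{\alpha\delta-\beta^{2}}\bigr)$ returns $(0,\xi)$, which lies on the switching line. To rule out periodic orbits, suppose a crossing periodic solution meets $x=0$ at $(0,y_{1}),(0,y_{2})$ with $y_{1}<y_{2}$; invoke $I(0,y_{1})=I(0,y_{2})$ and $H(0,y_{1})=H(0,y_{2})$ and cancel the factor $y_{1}-y_{2}$. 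With the continuity-imposed values $\mu=-\xi$, $\delta=-1$, these reduce to
\begin{equation*}
y_{1}y_{2}=\frac{\xi}{2}(y_{1}+y_{2}),\qquad y_{1}+y_{2}=2\xi,
\end{equation*}
so $y_{1}y_{2}=\xi^{2}$. Hence $y_{1},y_{2}$ are roots of $(y-\xi)^{2}=0$, forcing $y_{1}=y_{2}=\xi$ and contradicting $y_{1}<y_{2}$.

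For assertion (3), repeat the intersection analysis in the discontinuous setting but without continuity constraints. The two integrals now yield $y_{1}y_{2}=\xi(y_{1}+y_{2})/2$ and $y_{1}+y_{2}=2\mu/\delta$, so $y_{1},y_{2}$ are roots of
\begin{equation*}
y^{2}-\frac{2\mu}{\delta}\,y+\frac{\xi\mu}{\delta}=0.
\end{equation*}
This shows that at most one pair of crossing heights is algebraically possible, and hence at most one crossing limit cycle. Existence of exactly one limit cycle is then equivalent to the above quadratic having two distinct real roots that realize a genuine crossing on $x=0$ (rather than a tangency or a Filippov sliding mode). The real-and-distinct requirement is a discriminant inequality, and the crossing-direction requirements at each $(0,y_{i})$ translate into sign conditions on $y_{i}-\xi$ (from the left field) and on $\mu-\delta y_{i}$ (from the right field). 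Expressing the discriminant plus the crossing-consistency conditions in the symmetric functions of the roots and then in $\mu,\delta,\xi$ produces exactly the stated pair $\mu\delta(2\xi+\mu/\delta)>0$ and $\xi\mu/\delta<0$.

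The hardest step is this final reduction in part (3): one must account for the sign of $\delta$ (which governs whether the right flow on $x=0$ increases or decreases in $y$), check when both candidate heights are realized as genuine crossings rather than tangencies or sliding motions, and then identify the combined discriminant-plus-crossing condition with the stated inequalities without losing or gaining a case. A clean route is to write down the discriminant and the sum and product of the roots in terms of $\mu,\delta,\xi$ and then case-check on the signs of $\mu$ and $\delta$ that a genuine crossing limit cycle exists precisely when both stated inequalities hold.
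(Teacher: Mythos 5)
Your overall route is the same as the paper's: match the two vector fields on $x=0$ to settle continuity, then impose $I(0,y_1)=I(0,y_2)$ and $H(0,y_1)=H(0,y_2)$, cancel the factor $y_1-y_2$, and read off the symmetric functions of $y_1,y_2$. Parts (1) and (2) are correct; indeed your part (2) (the continuity-forced values give $y_1+y_2=2\xi$ and $y_1y_2=\xi^2$, hence a double root at $y=\xi$) is cleaner and more self-contained than the paper's appeal to the saddle separatrices, and your quadratic $y^2-\frac{2\mu}{\delta}y+\frac{\xi\mu}{\delta}=0$ has the correct signs that Vieta actually produces from the two level-set equations.

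The genuine gap is in the last step of part (3) --- exactly the step you flag as the hardest and then do not carry out. The claimed reduction to the stated pair of inequalities cannot come from the quadratic alone: the discriminant of your quadratic is $\frac{4\mu}{\delta^2}\left(\mu-\xi\delta\right)$, so ``real and distinct'' reads $\mu\delta\left(\frac{\mu}{\delta}-\xi\right)>0$, which is not the stated $\mu\delta\left(\frac{\mu}{\delta}+2\xi\right)>0$; worse, once $\xi\frac{\mu}{\delta}<0$ holds the product of the roots is negative, the discriminant is automatically positive, and ``two distinct real roots of opposite signs'' collapses to the single condition $\xi\frac{\mu}{\delta}<0$, so no second independent inequality can emerge from this count. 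Any additional constraint must come from the geometric analysis you defer: whether the level set $I=c$ contains a compact arc in $x\le 0$ joining $(0,y_1)$ to $(0,y_2)$ (writing the level set as $x^2=cy^2+y-\frac{\xi}{2}$, such an arc forces $c<0$ and hence $y_1y_2=-\frac{\xi}{2c}>0$, i.e.\ the two heights have the \emph{same} sign, which already conflicts with demanding opposite signs), and whether the branch of $H=h$ through these points closes up in $x\ge 0$ with transversal crossings. Until that is done, the ``exactly one limit cycle if and only if'' assertion is not established. For what it is worth, the paper's own proof also stops at the root count of a (differently signed) quadratic, so you should not expect the stated inequalities to fall out of the algebra you wrote; but in a complete proof the burden is on you to either derive them or correct them.
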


	\begin{proof}
		Note that, the system (\ref{Ch4s3eq17}) is continuous if and only if both the subsystems coincide on the line $x=0$ when extended to $x=0$.
		Therefore,   the system  (\ref{Ch4s3eq17}) is continuous if and only if 
		\begin{align}
			(y-\xi, 0)=(-\delta y+\mu, \beta y+\gamma)~~\text{for all}~~y.
		\end{align}
		That is, $\delta=-1,\mu=-\xi, \beta=0=\gamma.$	
		If $\beta\neq 0$ then 
		the system (\ref{Ch4s3eq17}) cannot be continuous.
		If $\beta=0$ and  the system (\ref{Ch4s3eq17})  is continuous then it becomes 
		\begin{align*}
			\dot{X}=\begin{cases}
				(y-2x^2-\xi, -2xy)~~&\text{if}~~x<0\\
				(y-\xi, \alpha x )~~&\text{if}~~x>0
			\end{cases}.
		\end{align*}
		Then the right subsystem has saddle at $(0,\xi)$ provided $\alpha>0$ and due to the saddle separatrices of the right subsystem, the system (\ref{Ch4s3eq17}) has no periodic solution.\\
		Further, the system (\ref{Ch4s3eq17})  has a periodic solution passing through the points $(0,y_1), (0,y_2)$ with $y_1<0<y_2$ if and only if 
		\begin{align*}
			I(0,y_1)&=I(0, y_2) ~~\text{and}~~H(0, y_1)=H(0,y_2).
		\end{align*}
		That is,
		\begin{align}\label{Ch4s3e_3.3}
			\frac{1}{y_1^2}\left(-y_1+\frac{\xi}{2}\right)=& \frac{1}{y_2^2}\left(-y_2+\frac{\xi}{2}\right), ~~\text{and}\\ \label{Ch4s3e_3.4}
			\frac{1}{2}\delta(y_1^2-y_2^2)-\mu(y_1-y_2)=&0.
		\end{align}
		Since $y_1\neq y_2$, from equations (\ref{Ch4s3e_3.3}) and (\ref{Ch4s3e_3.4}), we have
		\begin{equation}\label{Ch4s3e_3,5}
			\left.
			\begin{split}
				y_1+y_2=-2\frac{\mu}{\delta},\\
				\xi(y_1+y_2)-2y_1y_2=0.
			\end{split}
			\right\}
		\end{equation}
		From  (\ref{Ch4s3e_3,5}), we have $$y_1+y_2=-2\frac{\mu}{\delta},~\text{and}~ y_1y_2=-2\xi\frac{\mu}{\delta}.$$
		
		Thus, $y_1$ and $y_2$ are roots of the quadratic equation \begin{align} \label{Ch4s3e_3.6}
			y^2+2\dfrac{\mu}{\delta}y-2\xi\dfrac{\mu}{\delta}=0.
		\end{align}
		Equation (\ref{Ch4s3e_3.6}) has two distinct roots with opposite signs if  and only if
		$\mu \delta\left(2\xi+\dfrac{\mu}{\delta}\right)>0$ and $\xi\dfrac{\mu}{\delta}<0$.
		
		Therefore, the system (\ref{Ch4s3eq17}) has exactly one limit cycle if and only if $\mu \delta\left(2\xi+\dfrac{\mu}{\delta}\right)>0$ and $\xi\dfrac{\mu}{\delta}<0$. Otherwise, the system has no limit cycle.
	\end{proof}
	
	Now, consider the planar piecewise system placed in three zones with switching boundaries $x=\pm 1$ and formed by a global center and two linear saddles,
	\begin{align}\label{Ch4s3eq_18}
		\dot{X}=\begin{cases}
			(y-2(x+1)^2-\xi, -2(x+1)y),~~&\text{if}~~x<-1\\
			(-\beta_1 x-\delta_1 y+\mu_1, \alpha_1 x+\beta_1 y+\gamma_1 ),~~&\text{if}~~-1<x<1\\
			(-\beta_2 x-\delta_2 y+\mu_2, \alpha_2 x+\beta_2 y+\gamma_2 ),~~&\text{if}~~x>1.
		\end{cases}
	\end{align}
	Note that, the vector fields on the lines $x=\pm1$ are defined according to the Filippov convex combination.
	\begin{theorem} 	Consider the piecewise differential system (\ref{Ch4s3eq_18}) with $\delta_1\delta_2\neq 0.$ Let $l_1=-(\beta_1+\mu_1)$.  Then we have the following:
		\begin{enumerate}
			\item System (\ref{Ch4s3eq_18}) has at most four limit cycles when $l_1\xi\neq 0$.
			\item System (\ref{Ch4s3eq_18}) has at most three limit cycles when $l_1\neq 0, \xi=0$.
			\item	System (\ref{Ch4s3eq_18}) has at most two limit cycles when $l_1=0, \xi\neq 0$.
			\item	System (\ref{Ch4s3eq_18}) has no periodic solution when $l_1=\xi= 0$.
		\end{enumerate}
	\end{theorem}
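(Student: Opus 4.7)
The strategy is to imitate the resultant-based proof of Theorem~\ref{thmCh4s3}; the only structural difference is that the cubic double-center relation there is replaced here by a bilinear relation coming from the global-center first integral $I(x+1,y)=y^{-2}\bigl((x+1)^2-y+\xi/2\bigr)$. Suppose (\ref{Ch4s3eq_18}) admits a limit cycle. Since each subsystem is first-integrable, the cycle crosses the switching lines $x=\pm 1$ at exactly four points $(-1,y_1),(-1,y_2),(1,y_3),(1,y_4)$ with $y_1<y_2$ and $y_4<y_3$. Matching $I(x+1,y)$, $H_1$, $H_2$ at the corresponding pairs and dividing out the nonzero factors $y_2-y_1$ and $y_3-y_4$ gives
\begin{align*}
\xi(y_1+y_2)-2y_1y_2 &= 0,\\
\delta_1(y_1^2-y_4^2)+2l_1y_1+2m_1y_4+4\gamma_1 &= 0,\\
\delta_1(y_2^2-y_3^2)+2l_1y_2+2m_1y_3+4\gamma_1 &= 0,\\
\delta_2(y_3+y_4)+2l_2 &= 0,
\end{align*}
with $m_1=\mu_1-\beta_1$ and $l_2=-(\mu_2-\beta_2)$.

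The last three equations are formally identical to (\ref{Ch4s3eq2})--(\ref{Ch4s3eq4}), so I would recycle the two resultant eliminations performed in the earlier proof. After the harmless rescaling $\delta_1=\delta_2=1$ (legitimate because $\delta_1\delta_2\neq 0$), eliminating $y_4$ and then $y_3$ produces
\[
R(y_1,y_2)=Ay_1^2+By_2^2+Cy_1+Dy_2+E=0
\]
with exactly the coefficients printed just after (\ref{Ch4s3eq6}); crucially $A=-4l_1^2$. The final step is to compute $p(y_1)=\operatorname{Res}_{y_2}(R,Q)$, where $Q(y_1,y_2)=\xi(y_1+y_2)-2y_1y_2$. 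A direct $3\times 3$ Sylvester determinant expansion yields
\[
p(y_1)=4Ay_1^4+4(C-A\xi)y_1^3+\bigl[(A+B)\xi^2+(2D-4C)\xi+4E\bigr]y_1^2+\bigl[(C-D)\xi^2-4E\xi\bigr]y_1+E\xi^2.
\]

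The four assertions then follow by inspecting the degree of $p$, bearing in mind that $\{y=0\}$ is invariant for the left subsystem (since $\dot{y}=-2(x+1)y$), so $y_1=0$ is never geometrically realisable. When $l_1\xi\neq 0$ the leading coefficient $4A=-16l_1^2$ is nonzero and $p$ is a genuine quartic, which gives the bound in (1). When $\xi=0$ and $l_1\neq 0$ the polynomial reduces to $4y_1^2\bigl(Ay_1^2+Cy_1+E\bigr)$, so only the roots of the residual cubic factor $Ay_1^2+Cy_1+E$ (together with any simple $y_1=0$ contribution admitted into the count) survive, yielding (2). When $l_1=0$ and $\xi\neq 0$ one has $A=C=D=0$, so $p$ collapses to the quadratic $(B\xi^2+4E)y_1^2-4E\xi y_1+E\xi^2$, giving (3). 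When $l_1=\xi=0$ only a multiple of $y_1^2$ remains, and since $y_1\neq 0$ is forced geometrically this rules out any periodic orbit, proving (4).

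The main obstacle lies in the degenerate case (4): once $p$ has reduced to a monomial in $y_1$, the resultant no longer provides a nontrivial algebraic constraint, and one must supplement the algebra with the invariance of $\{y=0\}$ under the left subsystem to exclude the spurious root $y_1=0$. A subsidiary technicality, already present in (2) and (3), is to verify that the degree drops of $p$ are exactly those predicted by the vanishing of $\xi$ or $l_1$, i.e.\ that no further accidental cancellation of the subleading coefficients can occur under the stated hypotheses, so that the quoted bounds are the sharp ones that the resultant apparatus delivers.
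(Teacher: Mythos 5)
Your proposal follows essentially the same route as the paper: the same four first-integral matching equations, the same resultant eliminations of $y_4$, then $y_3$, then $y_2$ (after normalizing $\delta_1=\delta_2=1$), and the same degree count on the resulting quartic in $y_1$; your Sylvester-determinant coefficients are a correctly expanded, differently normalized version of the paper's equation (\ref{Ch4s4e_4.7}), and the discrepancy does not affect the degree analysis in any of the four cases. Your extra observation that $\{y=0\}$ is invariant for the left subsystem, used to exclude the spurious root $y_1=0$ in case (4), is a supplement the paper leaves implicit but effectively relies on.
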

	\begin{proof}
		Assume that the system has a periodic solution passing through four points namely,
		
		$(-1,y_1), (-1,y_2), (1, y_3)$ and $ (1, y_4)$ with $y_1>y_2$ and $y_4>y_3$.
		
		By similar argument as in Theorem \ref{thmCh4s3}, we get the following set of equations in $y_1,y_2,y_3$ and $y_4$,
		\begin{align}\label{Ch4s4e_4.2}
			\xi(y_1+y_2)-2y_1y_2=&0,\\ \label{Ch4s4e_4.3}
			\delta_1(y_3^2-y_2^2)+2l_1y_2+2m_1y_3+4\gamma_1=&0,\\ \label{Ch4s4e_4.4}
			\delta_1(y_4^2-y_1^2)+2l_1y_1+2m_1y_4+4\gamma_1=&0,\\ \label{Ch4s4e_4.5}
			\delta_2(y_3+y_4)+2l_2=&0,
		\end{align}
		where $l_1=-(\beta_1+\mu_1), m_1=\beta_1-\mu_1$ and $l_2=\beta_2-\mu_2$.
		
		Assume that $\delta_1\delta_2n\neq0$. From resultant theory, for instance see \cite{cox1997ideals,stiller1996introduction}, elimination of $y_3, y_4$ from the equations (\ref{Ch4s4e_4.3}), (\ref{Ch4s4e_4.4}) and (\ref{Ch4s4e_4.5}) gives,
		\begin{align}\label{Ch4s4e_4.6}
			Ay_1^2+By_2^2+Cy_1+Dy_2+E=0,
		\end{align}
		\begin{align*}
			\text{where}~A&=-4\delta_1\delta_2^2l_1^2,~
			B=4\delta_1\delta_2^2m_1^2,~ C=-8\delta_2l_1(2\delta_1\delta_2\gamma_1-2\delta_1l_2m_1-\delta_2m_1^2),~D=8\delta_2^2m_1^2l_1,\\
			\text{and}~E&=-16\delta_2l_2m_1^3+(-16\delta_1l_2^2+32\delta_2^2\gamma_1)m_1^2+32\delta_1
			\delta_2\gamma_1l_2m_1-16\delta_1{\delta_2}^{2}{\gamma_1}^{2}.
		\end{align*}
		Now, for ease of calculations if we put $\delta_1=\delta_2=1,$ then
		\begin{align*}
			A&=-4l_1^2,~
			B=4m_1^2,~
			C=8l_1(m_1^2+2l_2m_1-2\gamma_1),~
			D=8m_1^2l_1~\text{and}\\
			E&=-16l_2m_1^3+(-16l_2^2+16\gamma_1)m_1^2+32
			\gamma_1l_2m_1-16\gamma_1^2.
		\end{align*}
		Again, eliminating $y_2$ from equations (\ref{Ch4s4e_4.2}) and (\ref{Ch4s4e_4.6})
		we get,
		\begin{align}\label{Ch4s4e_4.7}
			&A{y_{{1}}}^{4}+ \left( -2\,A\xi+C \right) {y_{{1}}}^{3}+ \left( A{\xi}
			^{2}+B{\xi}^{2}-2\,C\xi+ D \xi+E \right) {y_{{1}}}^{2} \nonumber\\&+
			\left( C{\xi}^{2}- D {\xi}^{2}-2\,E\xi \right) y_{{1}}
			+E{\xi}^{2}=0.
		\end{align}
		From (\ref{Ch4s4e_4.7}), it is clear that, if $l_1\xi\neq 0$ then system (\ref{Ch4s3eq_18}) admits at most four limit cycles;
		if  $l_1\neq 0, \xi=0$,  then (\ref{Ch4s3eq_18}) admits at most three limit cycles;
		if $l_1=0, \xi\neq 0$  then (\ref{Ch4s3eq_18}) has at most two limit cycles; and if $l_1=\xi= 0$  then system (\ref{Ch4s3eq_18}) has no periodic solution.
	\end{proof}

	\section{General quadratic center and linear saddle}\label{Ch4sec3}
	In this section, we discuss limit cycles of piecewise differential systems formed by a quadratic center and linear saddle.
	Consider a nonlinear system
	\begin{align}\label{Ch4s2e1}
		\dot{X}=&\begin{cases}
			-ax-by+3qy^2+rx^2+2sxy, & \text{if}~x<0\\
			x+ay-3px^2-2rxy-sy^2,& \text{if}~x>0,
		\end{cases}
	\end{align}
	where $a,b,p,q,r,s$ denote the real constants. Assume that this system has a center at the origin. Note that, the system is Hamiltonian.
	
	The first integral (Hamiltonian) of the above system is given by, 
	\begin{align}\label{Ch4s3e_3.2}
		G(x,y)=&p{x}^{3}+q{y}^{3}+r{x}^{2}y+sx{y
		}^{2}-\dfrac{1}{2}{x}^{2}-axy-\dfrac{1}{2}b{y}^{2}.
	\end{align}
	Note that, if $G$ has strict local maxima or local minima at the origin, then the Hamiltonian system (\ref{Ch4s2e1}) has a center at the origin, for instance, see lemma in \cite{perko2013differential}, p.172. Hence, the Hamiltonian system (\ref{Ch4s2e1}) has a center at the origin if and only if $-a^2+b>0$
	
	\subsection{Nonlinear center and saddle separated by a straight line}
	
	Now, consider a piecewise smooth differential system separated by a straight line and formed by a nonlinear center and a linear Hamiltonian saddle;
	\begin{align}\label{Ch4s2e6}
		\dot{X}=\begin{cases}
			(G_y(x,y),-G_x(x,y))&~\text{if}~x<0\\
			(H_y(x,y),-H_x(x,y))&~\text{if}~x>0,
		\end{cases}
	\end{align} where $G$ is given by (\ref{Ch4s3e_3.2}) and $H$ is given by (\ref{Ch4s2e_2.6}).
	\begin{theorem}
		Consider the piecewise smooth system (\ref{Ch4s2e6}). 
		\begin{enumerate}
			\item If the system is continuous, then it has no limit cycle.
			\item If the system is discontinuous, then it can have at most two limit cycles when		
			$$2b\delta\,\mu\,q-3\,{\mu}^{2}{q}^{2}\geq 0.$$
		\end{enumerate} 
	\end{theorem}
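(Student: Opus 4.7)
The plan is to reduce the existence question for crossing periodic orbits of \eqref{Ch4s2e6} to an algebraic problem on the switching line $x=0$, using the Hamiltonian structure of both subsystems. First I would restrict the two Hamiltonians to the line, obtaining $G(0,y)=qy^{3}-\tfrac{b}{2}y^{2}$ and $H(0,y)=\mu y-\tfrac{\delta}{2}y^{2}$. Any crossing periodic orbit must meet $x=0$ at two distinct points $(0,y_{1})$ and $(0,y_{2})$ with $y_{1}<y_{2}$, and since the left and right arcs lie on level curves of $G$ and $H$ respectively, one has $G(0,y_{1})=G(0,y_{2})$ and $H(0,y_{1})=H(0,y_{2})$. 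Factoring out the non-vanishing $y_{1}-y_{2}$ leaves the pair
\[
q(y_{1}^{2}+y_{1}y_{2}+y_{2}^{2})-\tfrac{b}{2}(y_{1}+y_{2})=0, \qquad \mu-\tfrac{\delta}{2}(y_{1}+y_{2})=0.
\]

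For part (1), I would impose continuity of the vector field across $x=0$ by matching $(G_{y},-G_{x})|_{x=0}$ with $(H_{y},-H_{x})|_{x=0}$ coefficient by coefficient in $y$. Comparing the monomials $1,y,y^{2}$ in each component forces $q=s=\mu=\gamma=0$, $b=\delta$, and $a=\beta$. Under these relations both matching equations collapse to the single condition $y_{1}+y_{2}=0$, so every admissible crossing orbit has the symmetric form $y_{2}=-y_{1}$. This gives a one-parameter family of periodic orbits around the center (a period annulus), in which no orbit is isolated, and hence no limit cycle exists.

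For part (2), I would use the second matching equation to write $y_{1}+y_{2}=2\mu/\delta$ and substitute into the first, using $y_{1}^{2}+y_{1}y_{2}+y_{2}^{2}=(y_{1}+y_{2})^{2}-y_{1}y_{2}$ to obtain $y_{1}y_{2}=4\mu^{2}/\delta^{2}-b\mu/(q\delta)$. Consequently $y_{1},y_{2}$ are the two roots of the quadratic
\[
y^{2}-\frac{2\mu}{\delta}\,y+\frac{4\mu^{2}}{\delta^{2}}-\frac{b\mu}{q\delta}=0,
\]
whose discriminant is a positive multiple of the sign-carrying quantity $b\delta\mu q-3q^{2}\mu^{2}$. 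The hypothesis displayed in the statement is precisely what guarantees a real pair of roots, so there are at most two admissible real crossing values on $x=0$ and hence at most two crossing limit cycles.

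I expect the main obstacle to lie in the non-generic cases, in particular $q=0$ or $\delta=0$, where the quadratic degenerates and one must argue directly from the matching equations that either no solution or a continuum of solutions occurs. A second delicate point is that an algebraic solution $(y_{1},y_{2})$ of the matching system does not automatically produce a genuine closed orbit: one has to check that the level set $\{G=G(0,y_{1})\}$ admits an arc joining the two crossing points inside $\{x\le 0\}$ and similarly for $H$ inside $\{x\ge 0\}$. Because the level sets of the cubic Hamiltonian $G$ may break into more than one component, distinct arcs sitting over the same algebraic solution can in principle realize geometrically different closed orbits, which is what accounts for the bound \emph{two} rather than \emph{one} in the final count.
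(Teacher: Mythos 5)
Your proposal follows essentially the same route as the paper: restrict both Hamiltonians to $x=0$, equate level values at the two crossing points, cancel the factor $y_1-y_2$, match the two vector fields on the switching line in the continuous case to force $q=s=\mu=\gamma=0$, $b=\delta$, $a=\beta$ (whence, since $b>a^2\ge 0$ gives $b\neq 0$, the orbits form the symmetric period annulus $y_1=-y_2$), and in the discontinuous case reduce to a quadratic in $y$ via the elementary symmetric functions. One caveat: your discriminant computation correctly yields the quantity $b\delta\mu q-3\mu^2q^2$, whereas the statement and the paper's proof carry the coefficient $2$ in front of $b\delta\mu q$; these are not the same, so rather than asserting that your expression is ``precisely'' the displayed hypothesis, you should flag the factor-of-two discrepancy, which appears to be an arithmetic slip in the paper rather than in your derivation.
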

	\begin{proof}     
		Note that, the system $(\ref{Ch4s2e6})$ has a periodic orbit being a limit cycle, then it must intersect the $y$-axis exactly at two points. Let $(0,y_1)$ and $(0,y_2)$ be the points at which the above-mentioned periodic orbit intersects the line $x=0$, where $y_1<y_2$.  Since, $G(x,y)$ and $H(x,y)$ are the first integrals of the systems $(\ref{Ch4s2e1})$ and $(\ref{Ch4s2e_4})$ respectively, we have
		$$G(0,y_1)=G(0,y_2)$$ and $$H(0,y_1)=H(0,y_2).$$
		This implies that, \begin{align}\label{Ch4s2e7} -\dfrac{1}{2}\,b{y_{{2}}}^{2}+q{y_{{2}}}^{3}+\dfrac{1}{2}\,b{y_{{1}}}^{2}-q{y_{{1}}}^{3}=&0
		\end{align}
		and
		\begin{align}\label{Ch4s2e8}
			-\dfrac{1}{2}\,\delta\,{y_{{1}}}^{2}+\mu\,y_{{1}}+\dfrac{1}{2}\,\delta\,{y_{{2}}}^{2}-
			\mu\,y_{{2}}=&0.
		\end{align}
		If the system (\ref{Ch4s2e6}) is continuous, then the systems $(\ref{Ch4s2e1})$ and $(\ref{Ch4s2e_4})$ must coincide at every point on the line $x=0$, so that we  have  $q=0,s=0,b=\delta,\mu=0,a=\beta$ and $0=\gamma.$     
		
		Since, $y_1<y_2$,  solutions $(y_1,y_2)$ of the system (\ref{Ch4s2e7})-(\ref{Ch4s2e8}) are given by the equations,   
		\begin{equation} \label{Ch4s2e9}
			\left.
			\begin{split}
				b(y_1-y_2) (y_1+y_2)=&0\\
				\delta\left( y_{{1}}-y_{{2}} \right)  (y_1+
				y_2)=&0.
			\end{split}
			\right\}
		\end{equation}
		If $b=0$, then $\Delta(0,0)=0-a^2<0$. This contradicts to  fact that system $(\ref{Ch4s2e1})$ has a center at $(0,0)$. Hence, $b\neq 0$.
		From   (\ref{Ch4s2e9}) we get the solutions $(y_1, y_2)$ as,
		
		$$y_1=-y_2.$$
		Thus, the continuous piecewise smooth differential system formed by a nonlinear center and a linear Hamiltonian saddle have a continuum of periodic solutions and hence no limit cycles.
		
		Now, assume that the system (\ref{Ch4s2e6})  is discontinuous along the separation boundary $x=0$. From (\ref{Ch4s2e7}-(\ref{Ch4s2e8})) we get that,
		\begin{align*}
			(y_1-y_2)\left( q(y_1^2+y_1y_2+y_2^2)+\frac{b}{2}(y_1+y_2)\right)=&0,~~\text{and}\\
			(y_1-y_2)\left(\delta(y_1+y_2)-2\mu \right)=&0.
		\end{align*}
		Since $y_1<y_2$,
		\begin{align*}
			q(y_1^2+y_1y_2+y_2^2)+\frac{b}{2}(y_1+y_2)=&0,\\
			\delta(y_1+y_2)-2\mu =&0.
		\end{align*}
		Hence,
		$ y_1=\dfrac{2\mu}{\delta}-y_2~
		\text{with}~y_2={\dfrac {\mu}{\delta}}+\dfrac{\sqrt{\theta}}{q\delta},~\text{or}~~y_2={\dfrac {\mu}{\delta}}-\dfrac{\sqrt{\theta}}{q\delta},$
		where $$\theta=2b\delta\,\mu\,q-3\,{\mu}^{2}{q}^{2}\geq 0$$ and $q\neq 0.$

		Thus, the differential system (\ref{Ch4s2e6}) has at most two limit cycles if $$q(2b\delta\mu-3\,{\mu}^{2}q)\geq 0~~\text{and}~q\neq 0.$$
	\end{proof}
	
	\subsection{Nonlinear center and linear saddle separated by two straight lines} \label{Ch4s3}
	Now, consider a piecewise smooth system in three zones separated by two straight lines and formed by two linear Hamiltonian saddles and one nonlinear center. For simplicity, we consider two lines $x=-1$ and $x=1$ as the separation boundaries.

	Now define a piecewise smooth system   
	\begin{align}\label{Ch4s3e5}
		\dot{X}=\begin{cases}
			(G_y(x+1,y),-G_x(x+1,y))&\mbox{if}~x<-1\\
			({H_1}_y(x,y),-{H_1}_x(x,y))&\mbox{if}~-1<x<1\\
			({H_2}_y(x,y),-{H_2}_x(x,y))&\mbox{if}~x>1,
		\end{cases}
	\end{align} where $G, H_1$ and $H_2$ are given by (\ref{Ch4s3e_3.2}), (\ref{Ch4s2e_15}) and (\ref{Ch4s2e_17}), respectively.
	\begin{theorem}
		Consider the piecewise smooth system (\ref{Ch4s3e5}). Then we have the following:
		\begin{enumerate}
			\item  The system (\ref{Ch4s3e5}) can have at most four limit cycles if $\delta_1\delta_2q\neq 0$.
			\item The system (\ref{Ch4s3e5}) can have at most three limit cycles if $q=0$ and $b\delta_1\delta_2\neq 0$.
			\item If $\delta_1\delta_2\neq 0$ and $b\delta_1-6(\beta_1+\mu_1)q=0$, then the system (\ref{Ch4s3e5}) can have at most two limit cycles.
			\item If $\delta_1=0,\delta_2\neq 0$, then the system (\ref{Ch4s3e5})
			can have at most one limit cycle and exactly one limit cycle when $-4(\beta_1^2-\mu_1^2)q\neq 0$.
			\item The system (\ref{Ch4s3e5}) has a period annulus if $\delta_1=0,-4(\beta_1^2-\mu_1^2)\delta_2=0$ and $8\delta_2\gamma_1+4(\beta_1-\mu_1)(\beta_2-\mu_2)=0$.
		\end{enumerate}
	\end{theorem}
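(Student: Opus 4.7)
The plan is to mimic the resultant-elimination strategy used in the proof of Theorem \ref{thmCh4s3}, exploiting the fact that the two saddle pieces of (\ref{Ch4s3e5}) are structurally identical to those of (\ref{Ch4s2e_18}); only the left-hand equation produced by the new cubic integral is different. I would begin by assuming a crossing limit cycle exists and denoting its intersections with $x=\pm 1$ by $(-1,y_1),(-1,y_2),(1,y_3),(1,y_4)$ with $y_1\ne y_2$ and $y_3\ne y_4$. Writing out the four conservation identities and factoring out the nonzero differences $y_1-y_2$ and $y_3-y_4$ reproduces, in the three saddle variables, exactly equations (\ref{Ch4s3eq2})--(\ref{Ch4s3eq4}) of Theorem \ref{thmCh4s3}, together with the single new relation
\begin{equation*}
q(y_1^2+y_1y_2+y_2^2)-\tfrac{b}{2}(y_1+y_2)=0
\end{equation*}
coming from $G(0,y_1)=G(0,y_2)$ with $G(0,y)=qy^3-\tfrac12 by^2$.

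Next, assuming $\delta_1\delta_2\ne 0$, I would carry out verbatim the two resultant eliminations used in Theorem \ref{thmCh4s3}: first eliminate $y_4$ from (\ref{Ch4s3eq2}) and (\ref{Ch4s3eq4}), then eliminate $y_3$ from the outcome and (\ref{Ch4s3eq3}). This yields the biquadratic (\ref{Ch4s3eq6}) with the \emph{same} coefficients $A,B,C,D,E$ displayed there, because the saddle data are untouched. Eliminating $y_2$ between this biquadratic and the new cubic-center equation produces the master polynomial $P(y_1)=0$, of degree at most four when $q\ne 0$ and at most three when $q=0$. Every real root of $P$ determines $y_2,y_3,y_4$ uniquely through back-substitution, so the number of admissible $y_1$, and hence of crossing limit cycles, is bounded above by $\deg P$. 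This handles parts (1) and (2) at once.

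For part (3), I anticipate that the leading two coefficients of the quartic $P(y_1)$ are polynomial expressions sharing the common factor $b\delta_1+6l_1q$, where $l_1=-(\beta_1+\mu_1)$; the hypothesis $b\delta_1-6(\beta_1+\mu_1)q=0$ is precisely this vanishing, forcing $\deg P\le 2$. I would verify this by direct substitution into the expanded coefficients analogous to $A_1,A_2$ in (\ref{Ch4s3eq7}). For part (4), setting $\delta_1=0$ linearizes (\ref{Ch4s3eq2})--(\ref{Ch4s3eq3}), allowing $y_3,y_4$ to be expressed in terms of $y_1,y_2$; inserting these into (\ref{Ch4s3eq4}) fixes $y_1+y_2$ as a constant determined solely by the saddle parameters. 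The cubic-center equation, rewritten via $y_1^2+y_1y_2+y_2^2=(y_1+y_2)^2-y_1y_2$, then becomes a quadratic equation in the unknown $y_1$ alone, with leading coefficient proportional to $-4(\beta_1^2-\mu_1^2)q$; this has at most one admissible pair $\{y_1,y_2\}$ (since swapping is immaterial to the orbit), and the non-degeneracy hypothesis is exactly what keeps this leading coefficient nonzero and guarantees exactly one limit cycle. Finally, case (5) describes the parameter locus on which $P(y_1)$ vanishes identically: with $\delta_1=0$, the additional identities $(\beta_1^2-\mu_1^2)\delta_2=0$ and $8\delta_2\gamma_1+4(\beta_1-\mu_1)(\beta_2-\mu_2)=0$ should nullify every remaining coefficient of the eliminant, so that every admissible tuple $(y_1,y_2,y_3,y_4)$ compatible with the constraints produces a periodic orbit, i.e.\ a period annulus.

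The main obstacle is purely computational: one must confirm symbolically that the algebraic identities stated in parts (3) and (5) are \emph{precisely} the conditions under which the corresponding coefficients of $P(y_1)$ vanish. Each degree drop -- from four to two in (3), and the complete collapse in (5) -- involves an explicit but bulky manipulation of the resultant coefficients $A_1,\ldots,A_5$. These verifications are routine in principle and are best handled with a computer algebra system; the conceptual backbone is a direct adaptation of Theorem \ref{thmCh4s3}, with the only genuinely new ingredient being the form of the cubic-center equation.
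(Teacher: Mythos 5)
Your overall strategy --- four conservation identities, factoring out $y_1-y_2$ and $y_3-y_4$, then resultant elimination --- is the same as the paper's, and the saddle-side equations you obtain are indeed those of Theorem \ref{thmCh4s3} up to normalization. The genuine divergence is the choice of elimination variables. The paper first passes to the symmetric combinations $u=y_1+y_2$, $v=y_1-y_2$, $w=y_3+y_4$, $z=y_3-y_4$ (equations (\ref{Ch4s3e_3.13})--(\ref{Ch4s3e_3.16})) and eliminates down to the quartic (\ref{Ch4s3e_3.20}) in $u$, whose coefficients carry the explicit factors driving cases (2)--(5): in particular $B_1=3\delta_1^3\delta_2^4q$ and $B_2=-2\delta_1^2\delta_2^4(\delta_1b+3l_1q)$, the latter containing precisely the quantity $b\delta_1-6(\beta_1+\mu_1)q$ of case (3). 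You instead eliminate all the way down to a polynomial in $y_1$ itself. For cases (1), (2), (4), (5) this route can be made to work (for case (2) it even yields the sharper bound two, since $q=0$, $b\neq0$ forces $y_2=-y_1$ and the biquadratic collapses to a quadratic in $y_1$), but it breaks case (3).

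Concretely: the resultant in $y_2$ of the biquadratic $Ay_1^2+By_2^2+Cy_1+Dy_2+E$ with the center relation $q(y_1^2+y_1y_2+y_2^2)-\tfrac{b}{2}(y_1+y_2)$ has leading coefficient in $y_1$ equal to $q^2(A^2-AB+B^2)$ up to a positive constant --- the exact analogue of $A_1=A^2-AB+B^2$ in (\ref{Ch4s3eq7}). Since $A^2-AB+B^2=\bigl(A-\tfrac{B}{2}\bigr)^2+\tfrac{3}{4}B^2$ vanishes only when $A=B=0$, i.e.\ only when $\beta_1+\mu_1=\beta_1-\mu_1=0$, the hypothesis $b\delta_1-6(\beta_1+\mu_1)q=0$ of case (3) does not lower the degree of your eliminant: it remains quartic, and the common factor you ``anticipate'' in its two leading coefficients simply does not occur. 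That factor lives in the $u$-eliminant, not the $y_1$-eliminant, so the symbolic verification you defer to a computer algebra system would come back negative and your proof of case (3) would stall. (A smaller misattribution of the same kind appears in case (4): with $\delta_1=0$ and $u$ pinned by the saddle data, your residual quadratic in $y_1$ has leading coefficient $2q$; the factors $\beta_1\pm\mu_1$ in the nondegeneracy condition enter through the solvability for $u$ and for $y_3,y_4$, not through that leading coefficient.) To complete case (3) you must either switch to the paper's symmetric variables, where the relevant factor is visible in $B_2$, or supply an independent argument that the quartic in $y_1$ admits at most two admissible roots under that hypothesis; neither is automatic from what you have written.
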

	\begin{proof}  
		Note that, if the system  (\ref{Ch4s3e5}) has a periodic orbit which is a limit cycle, then it intersects the lines $x=-1$ and $x=1$ in exactly four different points viz, $(-1,y_1),(-1,y_2),(1,y_3)$ and $(1,y_4)$, where $y_1>y_2$ and $y_4>y_3$.
		
		Since, $G, H_1$ and $H_2$ are first integrals, we have
		\begin{align*}
			G(-1+1,y_1)=&G(-1+1,y_2),\\
			H_1(-1,y_1)=&H_1(1,y_4),\\
			H_1(-1,y_2)=&H_1(1,y_3),\\
			H_2(1,y_3)=&H_2(1,y_4). 
		\end{align*}  	
		That is, \begin{align}\label{Ch4s2e15}
			2q(y_1^2+y_1y_2+y_2^2)-b(y_1+y_2)=&0,\\ \label{Ch4s2e16}
			\delta_1(y_2^2-{y_{{3}}}^{2})+l_1y_2+m_1y_3+k_1
			=&0,\\ \label{Ch4s2e17}
			\delta_1(y_1^2-{y_{{4}}}^{2})+l_1y_1+m_1y_4+k_1
			=&0,\\ \label{Ch4s2e18}
			\delta_{{2}}(y_3+y_4)+ l_2=&0,
		\end{align} 
		where $l_1=-2(\beta_1+\mu_1), m_1=2(\beta_1-\mu_1), l_2=2(\beta_2-\mu_2)$ and $k_1=4\gamma_1.$ 
		
		Let $u:=y_1+y_2,v:=y_1-y_2, w:=y_3+y_4$ and $z:=y_3-y_4.$ Then the equations (\ref{Ch4s2e15})-(\ref{Ch4s2e18}) becomes,
		\begin{align}\label{Ch4s3e_3.13}
			\dfrac{1}{2}q \left(3{u}^{2}+{v}^{2} \right) -bu=&0,\\ \label{Ch4s3e_3.14}
			\delta_{{1}} \left( uv+wz \right) -l_{{1}}v-m_{{1}}z=&0,\\ \label{Ch4s3e_3.15}
			\dfrac{\delta_{{1}}}{2} \left( {u}^{2}+{v}^{2}-{w}^{2}-{z}^{2
			} \right) +l_{{1}}u-m_{{1}}w+2\,k_{{1}}=&0,\\ \label{Ch4s3e_3.16}
			w\delta_{{2}}+l_{{2}}=&0.	
		\end{align}
		Using resultant techniques to eliminate $w$ from the equations (\ref{Ch4s3e_3.14}), (\ref{Ch4s3e_3.15}) and (\ref{Ch4s3e_3.16}), we get
		\begin{align}\label{Ch4s3e_3.17}
			uv\delta_{{1}}\delta_{{2}}-v\delta_{{2}}l_{{1}}-z\delta_{{1}}l_{{2}}-z
			\delta_{{2}}m_{{1}}=&0\\ \label{Ch4s3e_3.18}
			{u}^{2}\delta_{{1}}{\delta_{{2}}}^{2}+{v}^{2}\delta_{{1}}{\delta_{{2}}
			}^{2}-{z}^{2}\delta_{{1}}{\delta_{{2}}}^{2}+2\,u{\delta_{{2}}}^{2}l_{{
					1}}-\delta_{{1}}{l_{{2}}}^{2}+4\,{\delta_{{2}}}^{2}k_{{1}}+2\,\delta_{
				{2}}l_{{2}}m_{{1}}=&0.
		\end{align}	
		Similarly, eliminating $z$ from (\ref{Ch4s3e_3.17}) and (\ref{Ch4s3e_3.18}) we get
		\begin{align}\label{Ch4s3e_3.19}
			-{u}^{2}{v}^{2}{\delta_{{1}}}^{3}{\delta_{{2}}}^{4}+2\,u{v}^{2}{\delta_{{1}}}^{2}{\delta_{{2}}}^{4}l_{{1}}+A_1u^2+A_2v^2 +A_3u+K=0,
		\end{align}	
		where 
		\begin{align*} 
			A_1=&{\delta_{{1}}}^{3}{\delta_
				{{2}}}^{2}{l_{{2}}}^{2}+2{\delta_{{1}}}^{2}{\delta_{{2}}}^{3}
			l_{{2}}m_{{1}}+\delta_{{1}}{\delta_{{2}}}^{4}{m_{{1}}}^{2}\\
			A_2=&{\delta_{{1}}}^{3}{\delta_{{2}}}^{2}{l_{{2}}}^{2}+2{\delta
				_{{1}}}^{2}{\delta_{{2}}}^{3}l_{{2}}m_{{1}}-\delta_{{1}}{\delta
				_{{2}}}^{4}{l_{{1}}}^{2}+\delta_{{1}}{\delta_{{2}}}^{4}{m_{{1}}
			}^{2}\\
			A_3=&2{\delta_{{1}}}^{2}{\delta_{{2}}}^{2}l_{{1}}{l_{{2}}}^{2}+4\delta_{{1}}{\delta_{{2}}}^{3}l_{{1}}l_{{2}}m_{{1}}+2{\delta_{{2}}
			}^{4}l_{{1}}{m_{{1}}}^{2},~\text{and}\\
			K=&-{\delta_{{1}}}^{3}{l_{{2}}}^{4}+4\,{\delta_{{1}}}^{2}{\delta_{{2}}}
			^{2}k_{{1}}{l_{{2}}}^{2}+8\,\delta_{{1}}{\delta_{{2}}}^{3}k_{{1}}l_{{2
			}}m_{{1}}+3\,\delta_{{1}}{\delta_{{2}}}^{2}{l_{{2}}}^{2}{m_{{1}}}^{2}+
			4\,{\delta_{{2}}}^{4}k_{{1}}{m_{{1}}}^{2}+2\,{\delta_{{2}}}^{3}l_{{2}}
			{m_{{1}}}^{3}.
		\end{align*}	
		Further, by eliminating $v$ from the equations (\ref{Ch4s3e_3.13}) and (\ref{Ch4s3e_3.19}), we get
		\begin{align}\label{Ch4s3e_3.20}
			B_1u^4+B_2 {u}^{3}+B_3 {u}^{2}+ B_4 u+Kq=0,
		\end{align}
		where \begin{align*}
			B_1=&3{\delta_{{1}}}^{3}{\delta_{{2}}}^{4}q,\\
			B_2=& -2{\delta_{
					{1}}}^{3}{\delta_{{2}}}^{4}b-6{\delta_{{1}}}^{2}{\delta_{{2}}}^{4}l_
			{{1}}q,\\
			B_3=&  \left( -2\,{\delta_{{1}}}^{3}{\delta_{{2}}}^{2}{l_{{2}}}^{2}-4\,{
				\delta_{{1}}}^{2}{\delta_{{2}}}^{3}l_{{2}}m_{{1}}+3\,\delta_{{1}}{
				\delta_{{2}}}^{4}{l_{{1}}}^{2}-2\,\delta_{{1}}{\delta_{{2}}}^{4}{m_{{1
			}}}^{2} \right) q+4{\delta_{{1}}}^{2}{\delta_{{2}}}^{4}l_{{1}}b
			,\\
			B_4=&  \left( 2\,{\delta_{{1}}}^{3}{\delta_{{2}}}^{2}{l_{{2}}}^{2}+4\,{
				\delta_{{1}}}^{2}{\delta_{{2}}}^{3}l_{{2}}m_{{1}}-2\,\delta_{{1}}{
				\delta_{{2}}}^{4}{l_{{1}}}^{2}+2\,\delta_{{1}}{\delta_{{2}}}^{4}{m_{{1
			}}}^{2} \right) b\\
			&+ \left( 2\,{\delta_{{1}}}^{2}{\delta_{{2}}}^{2}l_{{1
			}}{l_{{2}}}^{2}+4\,\delta_{{1}}{\delta_{{2}}}^{3}l_{{1}}l_{{2}}m_{{1}}
			+2\,{\delta_{{2}}}^{4}l_{{1}}{m_{{1}}}^{2} \right) q.
		\end{align*}
		From (\ref{Ch4s3e_3.20}), it is clear that the system (\ref{Ch4s3e5}) can have at most four limit cycles. Further, if $q=0$ and $b\delta_1\delta_2\neq 0$, then the equation (\ref{Ch4s3e_3.20}) becomes,
		$$-2\delta_1^3\delta_2^4bu^3+4\delta_1^2\delta_2^4l_1bu^2+C_1u+K=0,$$
		where $C_1=\left( 2\,{\delta_{{1}}}^{3}{\delta_{{2}}}^{2}{l_{{2}}}^{2}+4\,{
			\delta_{{1}}}^{2}{\delta_{{2}}}^{3}l_{{2}}m_{{1}}-2\,\delta_{{1}}{
			\delta_{{2}}}^{4}{l_{{1}}}^{2}+2\,\delta_{{1}}{\delta_{{2}}}^{4}{m_{{1
		}}}^{2} \right) b.$
		
		Thus, the system (\ref{Ch4s3e5}) can have at most three periodic orbits in this case.
		
		Also, if $\delta_1\delta_2\neq 0$ and $b\delta_1+3l_1q=0$, then the system (\ref{Ch4s3e5}) can have at most two periodic orbits.
		
		Suppose that $\delta_1=0$. Then the equation (\ref{Ch4s3e_3.20}) becomes,
		$$2\,{\delta_{{2}}}^{4}l_{{1}}{m_{{1}}}^{2} q u +(4\,{\delta_{{2}}}^{4}k_{{1}}{m_{{1}}}^{2}+2\,{\delta_{{2}}}^{3}l_{{2}}
		{m_{{1}}}^{3})=0.$$
		
		Thus, if $\delta_1=0,\delta_2\neq 0$, then the system (\ref{Ch4s3e5})
		can have at most one limit cycle and exactly one limit cycle when $l_1m_1q\neq 0$.
		
		Finally, if $\delta_1=0,\delta_2l_1m_1=0$ and $2\delta_2k_1+l_2m_1\neq 0$, then the system (\ref{Ch4s3e5}) can not have a limit cycle. Also, there is a period annulus when $2\delta_2k_1+l_2m_1= 0.$
	\end{proof}

	\section{Applications}\label{Ch4sec4}
	First, we present some examples of discontinuous piecewise linear differential systems with one limit cycle from \cite{llibre2022limit} and nonlinear systems with two or more limit cycles.
	\begin{example}
		Consider a piecewise linear system 
		\begin{align}\label{Ch4s4.40}
			\dot{X}=(x',y')=
			\begin{cases}
				(-8-16x-\frac{65}{2}y,-8(x+2y)),&\text{if}~x<-1\\
				(-1-2y,-1-2x), &\text{if}~-1<x<1\\
				(8-8x-10y,8+8x+8y),&\text{if}~x>1.	
			\end{cases}
		\end{align}
	\end{example}
	This system is of the type center-saddle-center.  This discontinuous piecewise differential system has one limit cycle intersecting the two discontinuous straight lines $x=1$ and $x=-1$ at the points $\left(-1,\dfrac{16}{65}+\dfrac{\sqrt{4873}}{36\sqrt{2}}\right), \left(-1,\dfrac{16}{65}-\dfrac{\sqrt{4873}}{36\sqrt{2}}\right),\left(1,-\dfrac{97\sqrt{4873}}{2340\sqrt{2}}\right)$ and $\left(1,\dfrac{97\sqrt{4873}}{2340\sqrt{2}}\right)$ (see Figure (\ref{Ch4Ex4.4.1})).
		\begin{figure}[H]	
			\subfloat[Flow of the system (\ref{Ch4s4.40})]{
				\begin{minipage}[c][1\width]{					0.3\textwidth}   				\centering   				\includegraphics[width=1\textwidth]{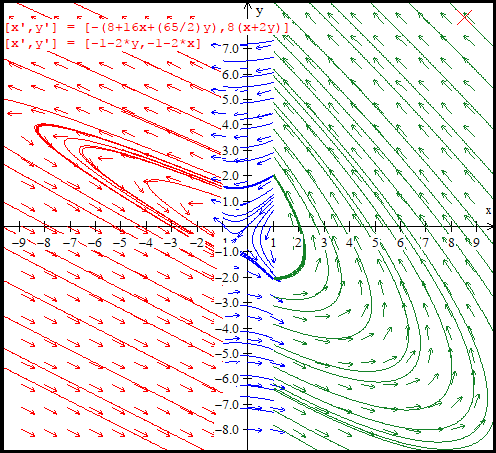} 				\label{Ch4Ex4.4.1a}  		\end{minipage}}  			\qquad \qquad	\qquad \qquad \qquad   			\subfloat[Limit cycle of (\ref{Ch4s4.40})]{  			\begin{minipage}[c][1\width]{   					0.3\textwidth}   				\centering  				\includegraphics[width=1\textwidth]{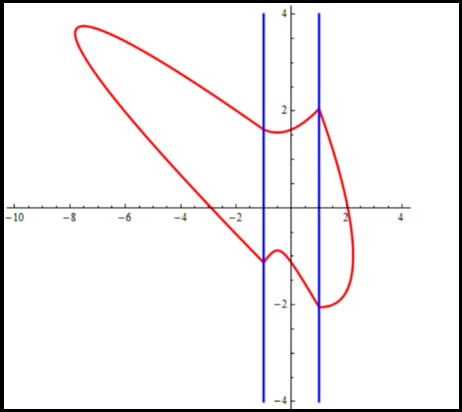}  				\label{Ch4Ex4.4.1b}  		\end{minipage}}     		\caption{Piecewise linear system of the type Center-Saddle-Center (\ref{Ch4s4.40})}  		\label{Ch4Ex4.4.1}   	\end{figure} 

	\begin{example}
		Consider a piecewise linear system 
		\begin{align}\label{Ch4s4.41}
			\dot{X}=(x',y')=
			\begin{cases}
				(-8-16x-\frac{65}{2}y,8(x+2y)),&\text{if}~x<-1\\
				(-1-2y,-1+2x), &\text{if}~-1<x<1\\
				(-y,4-x),&\text{if}~x>1.	
			\end{cases}
		\end{align}
	\end{example}
	The system (\ref{Ch4s4.41}) is a piecewise discontinuous system of the type Center-Center-Saddle. It has one limit cycle intersecting the straight lines $x=-1$ and $x=1$ at the points $(-1,y_1),(-1,y_2), (1,y_3)$ and $(1,y_4)$, where $y_1>y_2$ and $y_4>y_3$ (see Figure (\ref{Ch4Ex4.4.2})).
	
		\begin{figure}[H]
			\subfloat[Flow of the system (\ref{Ch4s4.41})]{   			\begin{minipage}[c][1\width]{  0.3\textwidth}  				\centering   				\includegraphics[width=1\textwidth]{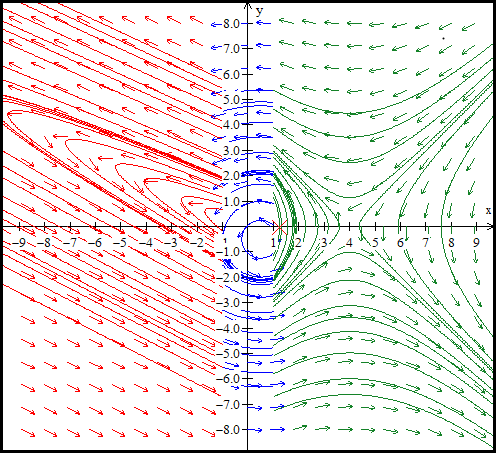}   				\label{Ch4Ex4.4.2a} 
			\end{minipage}} 
			\qquad \qquad	\qquad \qquad \qquad            
			\subfloat[Limit cycle of (\ref{Ch4s4.41})]{  			\begin{minipage}[c][1\width]{   					0.28\textwidth}   		\centering   				\includegraphics[width=1\textwidth]{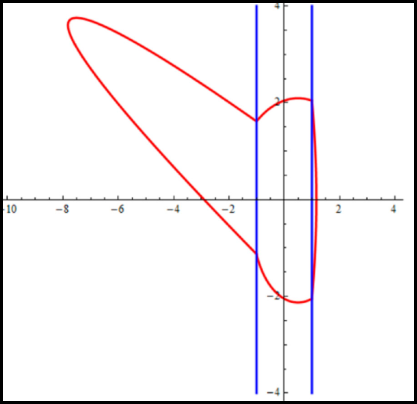}  				\label{Ch4Ex4.4.2b}   		\end{minipage}}   		\caption{Piecewise linear system of the type Center-Center-Saddle (\ref{Ch4s4.41})}   		\label{Ch4Ex4.4.2}
		\end{figure}
	\begin{example}
		Consider a piecewise linear system 
		\begin{align}\label{Ch4s4.42}
			\dot{X}=(x',y')=
			\begin{cases}
				(u,v),&\text{if}~x<-1\\
				(-1-2y,-1+2x), &\text{if}~-1<x<1\\
				(-2y,8-2x),&\text{if}~x>1,	
			\end{cases}
		\end{align}
		where $u=2-\frac{1}{48}(309-\sqrt{157881})x-\frac{65}{786}(405-\sqrt{157881})y$ and $v=-16-2x+\frac{1}{48}(309-\sqrt{157881})y.$
	\end{example}
	The system (\ref{Ch4s4.42}) is a piecewise discontinuous system of the type Saddle-Center-Saddle. It has one limit cycle intersecting the straight lines $x=-1$ and $x=1$ at the points $(-1,y_1),(-1,y_2), (1,y_3)$ and $(1,y_4)$, where $y_1>y_2$ and $y_4>y_3$ (see Figure (\ref{Ch4Ex4.4.3})).
	
			\begin{figure}[H]      		\subfloat[Flow of the system (\ref{Ch4s4.42})]{   			\begin{minipage}[c][1\width]{  	0.3\textwidth}  				\centering   				\includegraphics[width=1\textwidth]{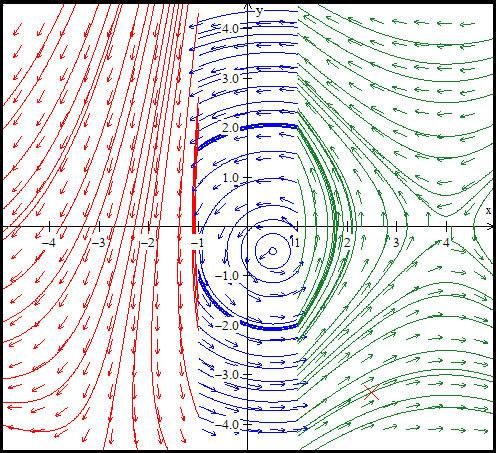}  				\label{Ch4Ex4.4.3a}  		\end{minipage}}   			\qquad \qquad	\qquad \qquad \qquad     		\subfloat[Limit cycle of (\ref{Ch4s4.42})]{  			\begin{minipage}[c][1\width]{  					0.28\textwidth}  \centering  				\includegraphics[width=1\textwidth]{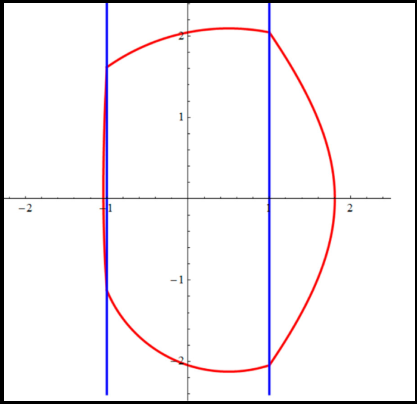}  			\label{Ch4Ex4.4.3b}  		\end{minipage}}  		\caption{Piecewise linear system of the type Saddle-Center-Saddle (\ref{Ch4s4.42})}   		\label{Ch4Ex4.4.3}  	\end{figure} 
	
	\begin{example}
		Consider a piecewise linear system 
		\begin{align}\label{Ch4s4.43}
			\dot{X}=(x',y')=
			\begin{cases}
				(u,v),&\text{if}~x<-1\\
				(-1-2y,-1-2x), &\text{if}~-1<x<1\\
				(8-8x-10y,8+8x+8y),&\text{if}~x>1,	
			\end{cases}
		\end{align}
		where \begin{align*} u=&2-\frac{1}{48}(309-\sqrt{157881})x-\frac{65}{786}(405-\sqrt{157881})y,~\text{and}\\ v=&-16-2x+\frac{1}{48}(309-\sqrt{157881})y.
		\end{align*}
	\end{example}
	The system (\ref{Ch4s4.43}) is a piecewise discontinuous system of the type Saddle-Saddle-Center. It has one limit cycle intersecting the straight lines $x=-1$ and $x=1$ at the points $(-1,y_1),(-1,y_2), (1,y_3)$ and $(1,y_4)$, where $y_1>y_2$ and $y_4>y_3$ (see Figure (\ref{Ch4Ex4.4.4})).
	
		\begin{figure}[H]  	
			\subfloat[Flow of the system (\ref{Ch4s4.43})]{ 
				\begin{minipage}[c][1\width]{  0.30\textwidth} 
					\centering  				\includegraphics[width=1\textwidth]{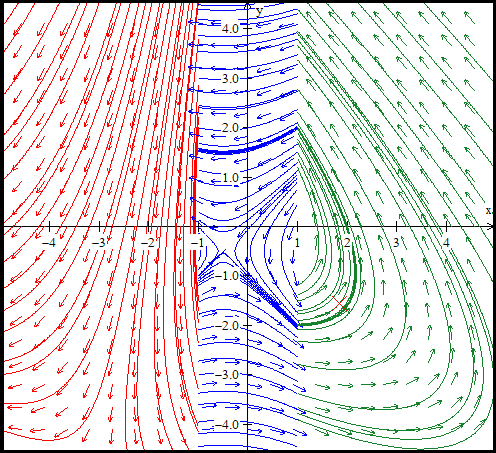} \label{Ch4Ex4.4.4a}  		\end{minipage}}  
			\qquad \qquad	\qquad \qquad \qquad   
			\subfloat[Limit cycle of (\ref{Ch4s4.43})]{ 
				\begin{minipage}[c][1\width]{0.29\textwidth} 
					\centering  				\includegraphics[width=1\textwidth]{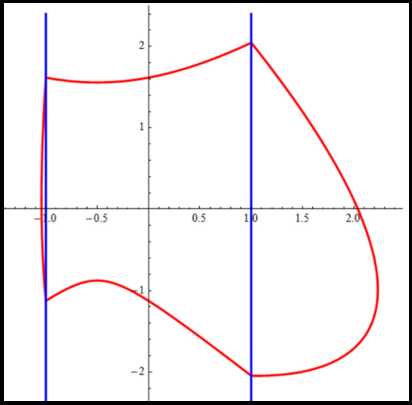}  				\label{Ch4Ex4.4.4b} 
			\end{minipage}} 
			\caption{Piecewise linear system of the type Saddle-Center-Saddle  (\ref{Ch4s4.43})} 
			\label{Ch4Ex4.4.4}
		\end{figure}
	
	Now, we provide examples of nonlinear systems (\ref{Ch4s2e6}) with one limit cycle and that with two limit cycles.
	\begin{example} Consider the differential system 
		\begin{align}\label{Ch4s4e1}
			\dot{X}=\begin{cases}
				(3y^2-4y, x),&~\text{if}~~x<0\\
				(-y+\frac{1}{2}, -x-c),&~\text{if}~~x>0.
			\end{cases}
		\end{align}
	\end{example}
	Note that, integral of the left subsystem is $$F(x,y)=y^3-2y^2-\dfrac{x^2}{2}$$ and integral of the right subsystem is $$H(x,y)=-\dfrac{y^2}{2}+\dfrac{y}{2}+\dfrac{x^2}{2}+cx.$$
	Assume that, the system (\ref{Ch4s4e1}) has a limit cycle passing through the points $(0,y_1)$ and $(0,y_2)$ with $y_1<0<y_2$.
	Then, we have $F(0,y_1)=F(0,y_2)$ and $H(0,y_1)=H(0,y_2)$.
	That is,
	\begin{equation}\label{Ch4s4e4.6}
		\left.
		\begin{split}
			(y_1^2+y_1y_2+y_2^2)-2(y_1+y_2)=&0,\\
			y_1+y_2=&1.	
		\end{split}
		\right\}
	\end{equation}
	Solutions of (\ref{Ch4s4e4.6}) are 
	$$ \left(\frac{1-\sqrt{5}}{2}, \frac{1+\sqrt{5}}{2}\right),~\text{and}~\left(\frac{1+\sqrt{5}}{2}, \frac{1-\sqrt{5}}{2}\right).$$
	Hence, there is exactly one limit cycle that passes through the points (Fig. \ref{Ch4s4fig4.5a}) $$\left(0,y_1=\dfrac{1-\sqrt{5}}{2}\right)~ \text{and} ~\left(0,y_2=\dfrac{1+\sqrt{5}}{2}\right).$$
	
	\begin{example}
		Consider the piecewise differential system
		\begin{align}\label{Ch4s4e4.7}
			\dot{X}=\begin{cases}
				(y-2x^2-0.8,-2xy),&\text{if}~x<0\\
				(x+y-1,-y),&\text{if}~x>0.
			\end{cases}
		\end{align}
		Note that, the Hamiltonian for the left subsystem is $$F(x,y)=\dfrac{1}{y^2}(x^2-y-0.4)$$ and that for the right subsystem is $$H(x,y)=xy+\dfrac{1}{2}y^2-y.$$
		Suppose that the system (\ref{Ch4s4e4.7}) has a periodic solution that passes through the points $(0,y_1)$ and $(0,y_2)$ with $y_1>y_2$. Then, we have $F(0,y_1)=F(0,y_2)$ and $H(0,y_1)=H(0,y_2)$. This gives 
		\begin{equation}\label{Ch4s4e4.8}
			\left.\begin{split}
				y_1y_2-(0.4)(y_1+y_2)=&0,\\
				y_1+y_2=&2.
			\end{split}\right\}
		\end{equation}
		Eliminating $y_2$ from the equations in (\ref{Ch4s4e4.8}), we get 
		\begin{align}\label{Ch4s4e4.9}
			y_1^2-2y_1+\dfrac{4}{5}=0.
		\end{align}
		Roots of (\ref{Ch4s4e4.9}) are $y=\dfrac{5+\sqrt{5}}{5}, \dfrac{5-\sqrt{5}}{5}.$
		
		Thus, there is exactly one limit cycle of (\ref{Ch4s4e4.7}) which passes through the points, 
		$$\left(0,\dfrac{5+\sqrt{5}}{5}\right)~\text{and}~ \left(0,\dfrac{5-\sqrt{5}}{5}\right).$$
	\end{example}
	
	Now, we provide an example of a system  (\ref{Ch4s3e5})  having one, two, and three limit cycles.
	\begin{example}
		Consider a piecewise differential system 
		\begin{align}\label{Ch4s4e2}
			\dot{X}=\begin{cases}
				(-3y^2+2ay, -x-1), & \text{if}~x<-1\\
				(2y+\frac{1}{4}, 2x+\frac{1}{4}),& \text{if}~-1<x<1\\
				(2y, 2x-4),& \text{if}~~x>1.
		\end{cases}\end{align}
	\end{example}
	Note that, the subsystem $\dot{X}=(-3y^2+2ay, -x-1)$ has a center at $(-1,0)$ and a saddle at $\left(-1, \dfrac{2a}{3}\right)$ when $b>0$. Also, note that this system has a homoclinic orbit around the point $(-1,0)$ that passes through $\left(-1, \dfrac{b}{3}\right)$. 
	
	The subsystem \begin{align}\label{Ch4s4e4.11}
		\dot{X}=\left(2y+\dfrac{1}{4},
		2x+\frac{1}{4}\right)
	\end{align} has a saddle at $(-\mu_1, 0)$, whereas the  subsystem \begin{align}\label{Ch4s4e4.12}
		\dot{X}=(2y, 2x-4)
	\end{align} has a saddle at $(-\mu_2,0)$.
	Integrals of the subsystems in (\ref{Ch4s4e2}) are given by
	\begin{align*}
		H(x,y)&=-y^3+ay^2+\frac{1}{2}(x+1)^2,\\
		H_1(x,y)&=y^2+\frac{y}{4}-x^2-\frac{x}{4},\\
		H_2(x,y)&=y^2-x^2+4x,
	\end{align*} respectively.
	
	Suppose that there is a periodic solution of (\ref{Ch4s4e2}) that passes through the points
	$(-1, y_1)$, $(-1, y_2)$, $(1, y_3)$ and $(1,y_4)$ with $y_1\neq y_2$ and $y_3\neq y_4$.
	Then we have
	\begin{align*}
		H(-1,y_1)&=H(-1, y_2),\\ H_1(-1,y_2)&=H_1(1,y_3),\\ H_2(1,y_3)&=H_2(1,y_4),\\ H_1(1,y_4)&=H_1(-1, y_2). 
	\end{align*}
	This implies,
	\begin{align}\label{CH4S4apli4.6eq7}
		y_1^2+y_1y_2+y_2^2-2a(y_1+y_2)&=0,\\ y_2^2-y_3^2+\frac{1}{2}(y_2-y_3)+\frac{1}{2}&=0,\\ y_3+y_4&=0,\\ 
		y_1^2-y_4^2+\frac{1}{2}(y_1-y_4)+\frac{1}{2}&=0.
	\end{align}
	Eliminating $y_3$ from the above equations, we get
	\begin{align}
		y_1^2+y_1y_2+y_2^2-2a(y_1+y_2)&=0,\\ \label{eq4.12} y_2^2-y_4^2+\frac{1}{2}(y_2+y_4)+\frac{1}{2}&=0, \\ \label{eq4.13}
		y_1^2-y_4^2+\frac{1}{2}(y_1-y_4)+\frac{1}{2}&=0.
	\end{align}
	Subtracting the equation (\ref{eq4.13}) from  (\ref{eq4.12}), we get
	\begin{align*}
		y_4=y_1^2-y_2^2+\frac{1}{2}(y_1-y_2).
	\end{align*}
	Substituting  this value of $y_4$ in equation (\ref{eq4.12}) gives,
	\begin{align}\label{Ch4s4e4.15}
		\frac{1}{4}{y_1}^{2}-{y_1}^{4}+2{y_1}^{2}{y_2}^{2}-{y_{{1}
		}}^{3}+{y_{{1}}}^{2}y_{{2}}-{y_{{2}}}^{4}+{y_{{2}}}^{2}y_{{1}}-{y_{{2}
		}}^{3}+\frac{1}{2}y_{{1}}y_{{2}}+\frac{1}{4}\left({y_{{2}}}^{2}+y_{{1}}+y_{{2}
		}+2\right)
		=0.
	\end{align}
	Finally, eliminating $y_2$ from the equations (\ref{Ch4s4e4.15}) and $(\ref{CH4S4apli4.6eq7})$, we get
	\begin{equation}\label{Ch4s4e4.16}
		\left.
		\begin{split}
			&9{y_{{1}}}^{8}+\frac{1}{16} \left( -384\,a+144 \right) {y_{{1}}}^{7}+\frac{1}{16} \left( 256{a}^{2}-96a+132 \right) {y_{{1}}}^{6}\\&+\frac{1}{16} \left( -
			512{a}^{2}-368\,a-12 \right) {y_{{1}}}^{5}+\frac{1}{16} \left( 512{a}^{3
			}+240{a}^{2}-8a-35 \right) {y_{{1}}}^{4}\\&+\frac{1}{16} \left( 16{a}^{2}
			-48a-47 \right) {y_{{1}}}^{3}+\frac{1}{16} \left( -32{a}^{3}+176{a}^{2
			}+66a-1 \right) {y_{{1}}}^{2}\\&+\frac{1}{16} \left( 32{a}^{2}+8a+2
			\right) y_{{1}}-8{a}^{4}-4{a}^{3}+\frac{1}{2}{a}^{2}+\frac{a+1}{4}=0.
		\end{split}
		\right\}
	\end{equation}
	If we choose $a<-0.63$, then there are four roots, three negative and one positive, for the equation (\ref{Ch4s4e4.16}). Other variables are determined from $y_1$. Hence, the system (\ref{Ch4s4e2}) has four limit cycles if $a<-0.63$. 
	
	If we choose $0.4<a$, then there are four roots, three positive and one negative for the equation (\ref{Ch4s4e4.16}). Hence, the system (\ref{Ch4s4e2}) has four limit cycles in this case.
	
	Similarly, for $-0.6<a<0.38$ the system (\ref{Ch4s4e2}) no limit cycles. 
	
	If $-0.38<a<0.4$, then there are two negative and two positive roots. Hence, the system (\ref{Ch4s4e2}) has four limit cycles.

	\section{{Acknowledgement}}
	The authors would like to express sincere gratitude to the reviewers for their valuable suggestions and comments.
	\section{Conflict of interest statement}
	No funding was received for conducting this study. The authors have no financial or proprietary interests in any material discussed in this article.

\section*{Declaration}	
The final version of this article will be published in \em{Journal of Difference Equations and Applications}.


\end{document}